\newcommand{\by}{\overline{Y}}
\newcommand{\der}{\delta}
\newcommand{\hb}{\hat{b}}
\newcommand{\iot}{\int_{0}^{t}}
\newcommand{\ott}{[0,\tau]}
\newcommand{\1}{{\bf 1}}
\newcommand{\R}{\mathbb R}
\newcommand{\be}{\mathbf{E}}
\newcommand{\bp}{\mathbf{P}}
\newcommand{\cac}{\mathcal C}
\newcommand{\ce}{\mathcal E}
\newcommand{\cf}{\mathcal F}
\newcommand{\ch}{\mathcal H}
\newcommand{\cn}{\mathcal N}
\newcommand{\al}{\alpha}
\newcommand{\ep}{\varepsilon}
\newcommand{\ga}{\gamma}
\newcommand{\gga}{\Gamma}
\newcommand{\la}{\lambda}
\newcommand{\oom}{\Omega}
\newcommand{\si}{\sigma}
\newcommand{\te}{\theta}
\newcommand{\tte}{\Theta}
\newcommand{\vp}{\varphi}
\newcommand{\lp}{\left(}
\newcommand{\rp}{\right)}
\newcommand{\lc}{\left[}
\newcommand{\rc}{\right]}
\newcommand{\lcl}{\left\{}
\newcommand{\rcl}{\right\}}
\newcommand{\lln}{\left|}
\newcommand{\rrn}{\right|}
\newcommand{\lla}{\left\langle}
\newcommand{\rra}{\right\rangle}
\newtheorem{theorem}{Theorem}[section]
\newtheorem{definition}[theorem]{Definition}
\newtheorem{hypothesis}[theorem]{Hypothesis}
\newtheorem{lemma}[theorem]{Lemma}
\newtheorem{notation}[theorem]{Notation}
\newtheorem{proposition}[theorem]{Proposition}
\theoremstyle{remark}
\newtheorem{remark}[theorem]{Remark}
\theoremstyle{remark}
\newcommand{\bean}{\begin{eqnarray*}}
\newcommand{\eean}{\end{eqnarray*}}
\newcommand{\ben}{\begin{enumerate}}
\newcommand{\een}{\end{enumerate}}
\newcommand{\beq}{\begin{equation}}
\newcommand{\eeq}{\end{equation}}
\begin{document}

\title[LAN property for fractional SDEs]
{LAN property for stochastic differential equations  \\ with additive fractional noise \\ and continuous time observation}

\author[Y. Liu]{Yanghui Liu}
\address[Yanghui Liu]{Department of Mathematics,
Purdue University,
150 N. University Street,
W. Lafayette, IN 47907,
USA.}
\email{liu2048@purdue.edu}
\urladdr{https://www.math.purdue.edu/~liu2048/}

\author[E. Nualart]{Eulalia Nualart}
\address[Eulalia Nualart]{Department of Economics and Business, Universitat Pompeu Fabra
and Barcelona Graduate School of Economics, Ram\'on Trias Fargas 25-27, 08005
Barcelona, Spain}
\email{eulalia.nualart@upf.edu}
\urladdr{https://www.upf.edu/web/eulalia-nualart}
\author[S. Tindel]{Samy Tindel}
\address[Samy Tindel]{Department of Mathematics,
Purdue University,
150 N. University Street,
W. Lafayette, IN 47907,
USA.}
\email{stindel@purdue.edu}
\urladdr{https://www.math.purdue.edu/~stindel/}

\date{}

\thanks{Second author acknowledges support from
the European Union programme FP7-PEOPLE-2012-CIG under grant agreement 333938. Third author  is supported by the NSF grant  DMS-1613163. The authors would like to thank David Nualart for fruitful discussions on the subject.}

\subjclass[2010]{62M09; 62F12}
\date{}
\keywords{fractional Brownian motion, parameter estimation, ergodicity}

\begin{abstract}
We consider a stochastic differential equation with additive fractional noise with Hurst parameter $H>1/2$, and 
a non-linear drift depending on an unknown parameter. We show the Local Asymptotic Normality property (LAN)
of this parametric model with rate $\sqrt{\tau}$ as $\tau\rightarrow \infty$, when the solution is observed  continuously on the time interval $[0,\tau]$. The proof uses ergodic properties of the equation and a Girsanov-type transform. We analyse the particular case of the fractional Ornstein-Uhlenbeck
process and show that the Maximum Likelihood Estimator is asymptotically efficient in the sense of the Minimax Theorem.
\end{abstract}

\maketitle

\section{Introduction}
\label{sec:intro}

Let $B$ be a $d$-dimensional fractional Brownian motion (fBm) with Hurst parameter $H>1/2$. Let us recall  that $B$ is a centered Gaussian process defined on a complete probability space $(\oom,\cf,\bp)$. The law of $B$ is thus characterized by its covariance function, which is defined by
\begin{equation}\label{eq:cov-fbm}
R_{s;t}
\equiv
\be \big[  B_t^i \, B_s^j \big] = \frac 12 \lp |t|^{2H} + |s|^{2H} - |t-s|^{2H}  \rp \, \1_{ \{ 0\}}(i-j),
\qquad s,t\in\R.
\end{equation}
The variance of the   increments of $B$ is  then given by
\begin{equation} \label{var}
\be \lc | B_t^i - B_s^i |^2 \rc = |t-s|^{2H}, \qquad s,t\in\R, \quad i=1,\ldots, d,
\end{equation}
 and this implies that  almost surely the fBm paths are $\gamma$-H\"{o}lder continuous for any $\gamma<H$. 
 
In this article, we consider 
a time horizon $\tau\in (0, \infty)$ and
 the following  $\R^d$-valued stochastic differential equation driven by $B$:
\begin{equation}\label{eq:sde}
Y_t=y_0+\iot  b(Y_s;\te) \, ds +  \sum_{j=1}^d \si_j B_t^{j} , \qquad t\in\ott.
\end{equation}
Here $y_0 \in \mathbb{R}^d$ is a given initial condition, $B=(B^{1}, \ldots, B^d)$ is the aformentioned fractional Brownian motion, the unknown parameter 
$\te$ lies in a certain set $\tte$ which will be specified later on, $\{b(\cdot;\te), \, \te\in\tte \}$ is a known family of  drift coefficients with $b(\cdot;\te):\R^d\to\R^d $, and $\si_1, \ldots, \si_d \in \mathbb{R}^d$ are assumed to be  known diffusion coefficients.

There has been a wide interest in drift estimation for stochastic equations driven by fractional Brownian motion in the recent past, partly motivated by inverse problems in a biomedical context \cite{KS}. However, notice that the existing literature on the topic mainly focuses on the case where the dependence $\theta\mapsto b(x;\te)$ is linear and all coefficients are real-valued. In this situation least squares and maximum likelihood estimators (MLE) for the unknown parameter $\theta$ can be computed explicitly, and numerous results are available: the case of continuous observations of $Y$ and where the drift coefficient is linear in both $\theta$ and $x$ (that is, fractional Ornstein-Uhlenbeck type process) has been studied e.g. in \cite{BEO, HN-foup, KL, LB}, either in the ergodic ($\theta<0$) and non-ergodic ($\theta >0$) case. See also the monograph \cite{Rao} and the references therein. Results on parameter estimation based on discrete observations of $Y$ in the linear case can be found e.g. in \cite{BI, XZX}. The case of a dependence of the form $(\te, x)\mapsto \te \, b(x)$ is handled e.g. in \cite{KMM, TV}.

However, the case of a general multi-dimensional coefficient $b(\te,x)$ in equation \eqref{eq:sde} is also a very natural situation to consider, though we are only aware of the articles \cite{CT, NT} giving some positive answers for such a dependence. The current contribution has thus to be thought of as a step in this direction. Indeed, our main aim is to show that the model given by equation~\eqref{eq:sde} satisfies the Local Asymptotic Normality property (LAN) with rate $\sqrt{\tau}$ as $\tau\rightarrow \infty$, when the process $Y$ is observed continuously on the time interval $[0,\tau]$. 

Before we proceed to a specific statement of our results, let us describe the assumptions we shall work with, which are similar to the ones used in \cite{NT}. We start with a standard hypothesis on the parameter set $\tte$:
\begin{hypothesis}\label{hyp:Theta}
The set $\tte$ is compactly embedded in $\R^q$ for a given $q\ge 1$.
\end{hypothesis}

In order to describe the assumptions on our coefficients $b$, we will use the following notation for partial derivatives:
\begin{notation}
Let $f:\R^d\times\tte\to\R$ be a $\cac^{p_1,p_2}$ function for $p_1,p_2\ge 0$. Then for any tuple $(i_1,\ldots i_{p})\in\{1,\ldots,d\}^{p}$, we set $\partial^{i_1\ldots i_{p}}_x f$ for $\frac{\partial^{p} f}{\partial x_{i_1}\ldots \partial x_{i_{p}}}$. 
 Analogously, we use  
the notation $\partial^{i_1\ldots i_{p}}_{\theta} f$ for $\frac{\partial^{p} f}{\partial \theta_{i_1}\ldots \partial \theta_{i_{p}}}$,
where $(i_1,\ldots i_{p})$ is a tuple in $\{1,\ldots,q\}^{p}$. Moreover, we will write 
$\partial_x  f$ resp. $\partial_{\theta} f$ for the Jacobi-matrices
$(\partial_{x_1} f, \ldots, \partial_{x_d}f)$ and 
$(\partial_{\te_1} f, \ldots, \partial_{\te_q}f) $. Finally, for the sake of simplicity, we denote by  $\langle \cdot, \cdot \rangle$ the Euclidean scalar product in $\R^q$ or $\R^d$, and by $\vert \cdot \vert$ the corresponding Euclidean norm.
\end{notation}

Let us now state the linear growth plus inward assumptions on our drift coefficient ensuring ergodic properties for the process $Y$:
\begin{hypothesis}\label{hyp:f-coercive} 
\noindent
\emph{(i)} There exists $\alpha>0$ such that for every $x,y\in\R^{d}$ and $\te\in\tte$ we have
\begin{equation*}
\lla  b(x;\te)-b(y;\te), \, x-y\rra \le  - \al \,  |x-y|^2.
\end{equation*}

\noindent
\emph{(ii)} We have $b \in \mathcal{C}^{2,1} (\mathbb{R}^d \times \tte; \mathbb{R}^d)$, with $\partial_{x}b$ and $\partial_{xx}^{2}b$ uniformly bounded in $(x,\te)$. 

\noindent
\emph{(iii)} We have $\hat{b}:=\partial_{\theta} b \in \mathcal{C}^{2,0} (\mathbb{R}^d \times \tte; \mathbb{R}^d\times \mathbb{R}^q)$, with $\partial_{x}\hat{b}$ and $\partial_{xx}^{2}\hat{b}$ uniformly bounded in $(x,\te)$. 

\noindent
\emph{(iv)} There exists $c>0$ such that for every $x, y \in\R^{d}$ and $\te, \te_0\in\tte$, the following Lipschitz and growth conditions  hold:
\begin{equation*} 
\begin{split}
&|b(x;\te)| \le c \lp  1+|x|\rp,  \qquad   |\hat{b}(x;\te)| \le c \lp  1+|x| \rp,
\qquad
 \vert \hat{b}(x;\te)- \hat{b}(x;\te_0) \vert \leq c \vert \theta-\theta_0 \vert (1+\vert x \vert). 
\end{split}
\end{equation*}
\end{hypothesis}

Furthermore, we suppose that the coefficient $\si$ fulfills an invertibility condition of the following form:

\begin{hypothesis}\label{hyp:invertible-sigma}
The number $d$ of driving fBm equals the dimension of the state space $\R^{d}$ for $Y$. In addition, denoting by $\sigma$ the $d \times d$ matrix with columns
$\sigma_1,\ldots,\sigma_d$,  we assume that $\sigma$ is invertible. 
\end{hypothesis}

Notice that Hypothesis \ref{hyp:f-coercive} and \ref{hyp:invertible-sigma} entail the following result (see next section for more details): for a given $\theta \in \tte$  the solution to equation (\ref{eq:sde}) 
satisfies a.s. 
$ \lim_{t\to\infty} |Y_{t}-\by_{t}| =0$, where $\by=\{\by_{t}, t\geq 0\}$ is a   stationary and ergodic stochastic process.

Let us now introduce the concept of LAN property in our context. Towards this aim,  for any $\theta \in \Theta$ and $\lambda < H$, we denote by ${\bp}_{\theta}$  (resp. ${\bp}^{\tau}_{\theta}$)  the probability laws of the solution to equation (\ref{eq:sde})  in the spaces
$\mathcal{C}^{\lambda}(\R_+;\R^d)$  (resp. $\mathcal{C}^{\lambda}([0,\tau];\R^d)$). 
Moreover, we assume that the process $Y$ is observed continuously in $[0,\tau]$.
In this context, the definition of LAN property takes the following form:

\begin{definition}
We say that the parametric statistical model $\{{\bp}_{\theta}, \,\theta \in \Theta\}$ satisfies the LAN property at $\theta \in \Theta$ if
there exist:

\noindent
\emph{(i)} A $q\times q$ invertible matrix $\varphi_{\tau}(\theta)$,

\noindent
\emph{(ii)} 
A $q \times q$ positive definite matrix $\Sigma(\theta)$, 

\noindent
such that for any $u \in \R^q$, the following limit holds true as $\tau\rightarrow \infty$:
\begin{equation*} \begin{split}
\log \lp \frac{d {\bp}^{\tau}_{\theta+\varphi_{\tau}(\theta) u}}{d {\bp}^{\tau}_{\theta}} \rp
\,\overset{\mathcal{L}({\bp}_{\theta})}{-\!-\!\!\!\longrightarrow} \,
 u^{\rm T} \mathcal{N}(0,\Sigma(\theta)) -\frac{1}{2} u^{\rm T}\Sigma(\theta) u,
\end{split}
\end{equation*}
where $\mathcal{N}(0,\Sigma(\theta))$ is a centered $q$-dimensional Gaussian random variable with covariance matrix $\Sigma(\theta)$. 
\end{definition}

The LAN property is a fundamental concept in asymptotic theory of statistics, which was developed by Le Cam \cite{LC}. The essence of LAN is that the local log-likelihood ratio is asymptotically normally distributed, with a locally constant covariance matrix and a mean equal to minus one half the variance.
The main application of the LAN property is the following Minimax Theorem. It asserts (if LAN holds true) an asymptotic (minimax) lower bound for the risk with respect to a loss function, for any sequence $\hat{\theta}_{\tau}$ of estimators of $\theta$.  

A loss function is defined as a function $l:\mathbb{R}^q\to[0,+\infty)$ satisfying the following properties: 
\begin{itemize}
\item For any $u \in \R^q$, $\ell(u)=\ell(-u)$, $\ell$ is continuous at $0$, $\ell(0)=0$ but is not identically $0$.

\item For all $c>0$ the sets $\{u: \ell(u) <c\}$ are convex.

\item $\ell(u)$ has growth as $\vert u \vert \rightarrow \infty$ less that
$e^{\epsilon \vert u \vert^2}$, for any $\epsilon>0$.
\end{itemize}

\begin{theorem}[Minimax Theorem]  \textnormal{\cite[Theorem II.12.1]{IH}, \cite[Theorem 2.4]{KY}}\label{thm:minimax}
Suppose that the family of probability measures $(\bp_{\theta})_{\theta\in\Theta}$ satisfies the LAN property at a point $\theta$. Let $(\hat{\theta}_{\tau})_{\tau \geq 0}$ be a family of  estimators of the parameter $\theta$. Then for any loss function $\ell$ we have:
\begin{equation}\label{eq:low-bnd-minimax}
\lim_{\delta\to 0}\liminf_{\tau\to\infty}\sup_{\theta'\in \Theta:\vert\theta'-\theta\vert<\delta}\be_{\theta'}
\left[\ell\left(\varphi_{\tau}^{-1}(\theta)\left(\hat{\theta}_{\tau}-\theta'\right)\right)\right]\geq \be_{\theta}\left[\ell\left(Z\right)\right],
\end{equation}
where $\mathcal{L}(Z)=\mathcal{N}(0,\Sigma(\theta)^{-1})$.
\end{theorem}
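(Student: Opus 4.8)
The statement is the classical Hájek--Le Cam asymptotic minimax theorem, and since it is quoted from \cite{IH} I would not reprove it from scratch; but the natural plan is to deduce it from the LAN hypothesis by comparing the localized family of experiments at $\theta$ with a limiting Gaussian shift experiment. Writing $\psi_{\tau}(u)=\theta+\varphi_{\tau}(\theta)u$ and introducing the rescaled estimators $\hat{u}_{\tau}=\varphi_{\tau}^{-1}(\theta)(\hat{\theta}_{\tau}-\theta)$, the left-hand side of \eqref{eq:low-bnd-minimax} becomes, after the change of variables $\theta'=\psi_\tau(u)$, a supremum of integrated risks $\be_{\psi_\tau(u)}[l(\hat{u}_\tau-u)]$ over a shrinking neighborhood of $u=0$. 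First I would truncate the loss, replacing $l$ by $l_L=l\wedge L$ for a fixed level $L>0$; since $l_L$ is still bounded, symmetric and bowl-shaped and $\be_\theta[l(Z)]=\lim_{L\to\infty}\be_\theta[l_L(Z)]$ by monotone convergence, it suffices to establish the lower bound for each $l_L$ and let $L\to\infty$ at the very end.

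Next I would bring in the LAN structure. Writing $\Lambda_\tau(u)=\log(d\bp^\tau_{\psi_\tau(u)}/d\bp^\tau_\theta)$, the hypothesis furnishes the expansion $\Lambda_\tau(u)=u^{\rm T}\Delta_\tau-\tfrac12 u^{\rm T}\Gamma(\theta)u+o_{\bp_\theta}(1)$ with $\Delta_\tau\overset{\mathcal L(\bp_\theta)}{\longrightarrow}\mathcal N(0,\Gamma(\theta))$. This is precisely the log-likelihood structure of the Gaussian shift experiment in which one observes $X=u+\Gamma(\theta)^{-1/2}\xi$, $\xi\sim\mathcal N(0,\mathrm{Id})$, and estimates $u$. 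Consequently the sequence of experiments $(\bp^\tau_{\psi_\tau(u)})_u$ converges in the Le Cam sense to this Gaussian shift, and by Le Cam's third lemma one identifies the limiting law of $\Delta_\tau$ under $\bp_{\psi_\tau(u)}$ as $\mathcal N(\Gamma(\theta)u,\Gamma(\theta))$.

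The third step is to pass to the limit and invoke Anderson's lemma. By the asymptotic representation theorem, any in-distribution limit point of $\hat{u}_\tau$ (under $\bp_{\psi_\tau(u)}$) is realized as the law of a randomized estimator $T$ of $u$ in the limiting Gaussian experiment, so that $\liminf_\tau\be_{\psi_\tau(u)}[l_L(\hat{u}_\tau-u)]\ge\be[l_L(T-u)]$ for the bounded continuous loss $l_L$. Taking a supremum over $u$ and reducing the spatial supremum to a Bayes risk over priors $\pi$ supported on compacts $K\uparrow\R^q$ leaves us with the minimax risk $\inf_T\sup_u\be[l_L(T-u)]$ of the Gaussian location problem. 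Here Anderson's lemma is decisive: since $l_L$ is bowl-shaped and $\Gamma(\theta)^{-1/2}\xi$ is Gaussian, the map $a\mapsto\be[l_L(\Gamma(\theta)^{-1/2}\xi+a)]$ is minimized at $a=0$, so the equivariant choice $T=X$ is minimax with risk $\be[l_L(\Gamma(\theta)^{-1/2}\xi)]=\be_\theta[l_L(Z)]$, where $\mathcal L(Z)=\mathcal N(0,\Gamma(\theta)^{-1})$. Letting $L\to\infty$ then yields \eqref{eq:low-bnd-minimax}.

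The main obstacle is the transfer step, namely making rigorous the passage from the concrete path-space experiments to the limiting Gaussian one. Two points need real care: establishing the tightness and uniform integrability needed for the truncated risks to converge (which is exactly why the reduction to the bounded $l_L$ is indispensable, bounded bowl-shaped losses being continuous bounded functionals for the weak convergence), and justifying the asymptotic representation theorem in this infinite-dimensional setting so that limit points of the rescaled estimators are genuine estimators in the Gaussian shift model. By contrast, the Bayes reduction, the substitution of the LAN expansion, and the application of Anderson's lemma are either soft or entirely classical; it is the interchange of limits and the measure-theoretic transfer that carry the content, which is why the statement is cited from \cite{IH} rather than reproduced here.
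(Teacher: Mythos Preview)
The paper does not prove this theorem: it is simply quoted from \cite[Theorem~12.1]{IH} and stated without argument, so there is no ``paper's own proof'' to compare against. Your sketch is the standard H\'ajek--Le~Cam route (localize, reduce to a bounded bowl-shaped loss, pass to the Gaussian shift experiment via LAN and Le~Cam's third lemma, then apply Anderson's lemma), and you are right that the delicate part is the transfer/representation step; but since the paper treats the result as a black box from \cite{IH}, your proposal already goes further than what the authors provide.
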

Observe that when $\ell(u)=\vert u \vert^2$, the lower bound in \eqref{eq:low-bnd-minimax} is simply the trace of $\Sigma(\theta)^{-1}$. 
A sequence of estimators that attains this asymptotic bound for some loss function $\ell$  is  called asymptotically efficient for this loss function. Therefore, Theorem \ref{thm:minimax} opens the way to a theory which mimics the concept
of efficient estimator from the Cram\'er-Rao lower bound.

As one can see, the LAN property is an important tool in order to quantify the identifiability of a system.
The aim of this paper is thus to show the following result for our equation~\eqref{eq:sde}.
\begin{theorem} \label{lan}
Assume Hypothesis \ref{hyp:f-coercive}, and recall that ${\bp}_{\theta}^{\tau}$  stands for  the probability law of the solution to equation 
\textnormal{(\ref{eq:sde})}  in the space
$\mathcal{C}^{\lambda}([0,\tau];\R^d)$. Then, 
for any $\theta \in \Theta$ and $u \in \R^q$ fixed, as $\tau \rightarrow \infty$, we have:
\begin{equation} \label{limit}\begin{split}
\log \frac{d {\bp}^{\tau}_{\theta+\frac{u}{\sqrt{\tau}}}}{d {\bp}^{\tau}_{\theta}} 
\overset{\mathcal{L}({\bp}_{\theta})}{-\!\!-\!\!\!\longrightarrow} 
u^{\rm T} \mathcal{N}(0,\Sigma(\theta)) -\frac{1}{2} u^{\rm T}\Sigma(\theta) u,
\end{split}
\end{equation}
where the matrix $\Sigma(\theta)$ is defined by:
\begin{equation} \label{gamma_def}
\Sigma(\theta):=C_H \int_{\R_{+}^{2}} 
\frac{\be_{\theta}[(\hat{b}(\by_{0};\theta)-\hat{b}(\by_{r_{1}};\theta))^{\rm T} (\sigma^{-1})^{\rm T}\sigma^{-1}(\hat{b}(\by_{0};\theta)-\hat{b}(\by_{r_{2}};\theta))]}{r_{1}^{1/2+H} r_{2}^{1/2+H}} 
\, dr_{1} dr_{2},
\end{equation}
where the constant $C_{H}$ is defined by:
\begin{equation} \label{ch}
C_{H}= \frac{\sin^{2}\!\lp \pi\lp H-1/2 \rp \rp  \, \gga^{2}\lp H+1/2 \rp}
{2H\pi^{2} \, \sin(\pi H) \, \gga(2H)} 
=    [
{   \, \sin(\pi H) \, \gga(1/2-H)^{2} \gga(2H+1)}]^{-1} .
\end{equation}
In \eqref{gamma_def}, recall that $\by$ is the ergodic limit of our process $Y$ and that we have set $\hat{b}$ for the Jacobian matrix 
$\partial_{\theta} b$.
\end{theorem}

The LAN property for stochastic processes has been widely explored in the literature. To mention a few references close to our contribution, the case of ergodic diffusion processes observed continuously (with an unknown parameter on the drift coefficient) is dealt with in e.g. \cite[Proposition 2.2]{KY}. The proof follows from Girsanov's theorem, ergodic properties and the central limit theorem for Brownian martingales. The rate of convergence achieved in \cite[Proposition 2.2]{KY} is $\sqrt{\tau}$ like in our Theorem \ref{lan}, with a function $\gga(\te)$ given by:
$$
\Sigma(\theta)= \be_{\theta}[\hat{b}(\by;\theta)^{\rm T} \sigma^{-1}(\by)^{\rm T}\sigma^{-1}(\by)\hat{b}(\by;\theta)],
$$
which should be compared to our expression \eqref{gamma_def}.
 The case where the Brownian diffusion process is observed discretely was solved in \cite{G} using an integration by parts formula taken from the Malliavin calculus. Let us also mention that the LAN property is obtained in \cite{CGLL} for some discretely observed fractional noises including fractional Brownian motion. This last result is achieved by a direct expansion of log-likelihood functions, plus a thorough analysis based on properties of Toeplitz matrices.

Let us say a few words about the strategy we have followed in order to handle the case of equation \eqref{eq:sde}. In order to compare likelihood functions for different values of the parameter~$\theta$, we take the obvious choice of applying Girsanov's theorem for fractional Brownian motion (following the steps of \cite{MN}). We are then left with two technical difficulties in order to get asymptotic results: (i) Handle the singularities popping out of the fractional derivatives in the Girsanov exponent. (ii) Get ergodic results in H\"older type norms for our process $Y$. Let us also mention that CLTs for martingales, which were an essential tool in the diffusion case, are unavailable in our fBm situation.  

Interestingly enough, our general Theorem \ref{lan} applies to the fractional Ornstein-Uhlenbeck case (that is, $d=1$ and $b(x; \theta)=-\theta x$ for $\theta>0$.) In this context we find (i) That $\Sigma(\theta)$ does not depend on $H$. (ii) That the MLE reaches the lower bound in \eqref{eq:low-bnd-minimax}, and is thus asymptotically minimax efficient. 

Finally, the generalization of Theorem \ref{lan} to a fBm $B$ with Hurst parameter $H<1/2$ is   obviously a natural problem to consider. To this aim and following the strategy we have just summarized above, one can try to apply Girsanov's transform and ergodic theorems for the solution process $Y$ to~\eqref{eq:sde}. Then the main difficulty to implement this strategy stems from the fact that one is always left with evaluations of integrals similar to the right-hand side of relation \eqref{gamma_def}. Indeed, when $H<1/2$ it is easily seen that this kind of integral exhibits some divergences as $r_{1}$, $r_{2} \to \infty$, and we haven't found a proper way to cope with this additional difficulty. The LAN property for equation \eqref{eq:sde}  with $H<1/2$ is thus still an open and challenging question for us. 

Our paper is structured as follows: Section \ref{sec:preliminaries} is devoted to necessary preliminary results. Then we prove Theorem \ref{lan} in Section \ref{sec:proof-main-thm}, following the strategy described above. Eventually, in Section 4, we analyze the particular case of the fractional Ornstein-Uhlenbeck process. 

Let us close this introduction by giving a set of notations which will prevail until the end of the paper.

\begin{notation}\label{not:general}
We use the following conventions: for 2 quantities $a$ and $b$, we write $a \lesssim b$ if there exists a universal constant $c$ such that $a\le c\,b$. In the same way, we write $a\asymp b$ whenever $a \lesssim b$ and $b \lesssim a$. If $f$ is a vector-valued function defined on an interval $[0,\tau]$ and $s,t\in [0,\tau]$,  $\der f_{st}$ designates the increment $f_{t}-f_{s}$.
\end{notation}

\section{Auxiliary Results}\label{sec:preliminaries}

In this section we first recall some basic facts about stochastic analysis for a fBm $B$, and also about Young integrals. Then we shall derive some pathwise and probabilistic estimates for equation \eqref{eq:sde} under the coercive Hypothesis \ref{hyp:f-coercive}.

\subsection{Stochastic analysis related to B}
\label{sec:cameron-martin}

The ergodic properties of equation \eqref{eq:sde} are accurately encoded by the fBm representation given in \cite{Ha}, which goes back to the original paper~\cite{MVn}. We present this construction here for a one-dimensional fBm, for sake of simplicity. Obvious generalizations to the $d$-dimensional case are left to the reader. 

Let $W$ be a two-sided Brownian motion, and $H$ a Hurst parameter in $(0,1)$. We consider the process $B$ defined for $t\in\R$ by:
\begin{eqnarray}\label{eq:mandelbrot-rep-B}
B_{t} &=& c_{1}(H) \int_{\R_{-}} (-r)^{H-1/2} \lc dW_{t+r} - dW_{r} \rc \\
&=& 
c_{1}(H) \lcl  \int_{-\infty}^{0} \lc (t-r)^{H-1/2} -  (-r)^{H-1/2} \rc dW_{r} 
- \int_{0}^{t}  (t-r)^{H-1/2}  dW_{r} \rcl,
\notag
\end{eqnarray}
 where the constant $c_{1}(H)$ is defined by:
\begin{equation}\label{eq:def-c1(H)}
c_{1}(H)
=  
\frac{\lc 2H \, \sin(\pi H) \, \gga(2H)\rc^{\frac12}}{\gga\lp H+1/2 \rp}.
\end{equation}
Then $B$ is well-defined as a fBm, that is a centered Gaussian process with covariance given by relation \eqref{eq:cov-fbm}. Equation \eqref{eq:mandelbrot-rep-B} is often referred to as Mandelbrot's representation of fractional Brownian motion.

The process $B$ defined as \eqref{eq:mandelbrot-rep-B} is closely related to the following fractional derivatives: for $\al\in(0,1)$ and $\vp\in\cac_{c}^{\infty}(\R)$ we set
\begin{equation}\label{eq:def-frac-deriv-intg}
\lc D_{+}^{\al}\vp \rc_{t} =\frac{\alpha}{\Gamma(1-\alpha)} \int_{\R_{+}} \frac{\vp_{t}-\vp_{t-r}}{r^{1+\al}} \, dr,
\quad\text{and}\quad
\lc I_{+}^{\al}\vp \rc_{t} = \frac{1}{\Gamma(\alpha)}\int_{\R_{+}} \vp_{t-r} \, r^{\al-1} \, dr.
\end{equation}
With this notation in mind, the following proposition identifies a convenient operator which transform $W$ into $B$ as in \eqref{eq:mandelbrot-rep-B}. We refer to \cite[Lemma 3.6]{Ha} for further details.

\begin{proposition}\label{prop:K-H}
For $w\in \cac_{c}^{\infty}(\R)$ such that $w_{0}=0$ and $H\in(0,1)$, set 
\begin{equation*}
\lc K_{H} w \rc_{t}
=
c_{1}(H) \int_{\R_{-}} (-r)^{H-1/2} \lc \dot{w}_{t+r} - \dot{w}_{r} \rc dr .
\end{equation*}
Then the following holds true:
 
\noindent
\emph{(i)}
The operator $K_{H}$ admits the following expression:
\begin{equation*}
\lc K_{H} w \rc_{t}=
\begin{cases}
 c_{2}(H) \lp [ I_{+}^{H-1/2}w ]_{t} -  [I_{+}^{H-1/2}w ]_{0} \rp, & \text{for } H>\frac12  \\
-c_{2}(H) \lp [ D_{+}^{1/2-H}w ]_{t} -  [D_{+}^{1/2-H}w ]_{0} \rp, & \text{for } H<\frac12 ,
\end{cases}
\end{equation*}
where the constant $c_{2}(H)$ is given by $c_{2}(H)=c_{1}(H) \Gamma(H+\frac12)$, with $c_{1}(H)$ defined by \eqref{eq:def-c1(H)}.

\noindent
\emph{(ii)} Define for every $w\in C^{\infty}_{c}(\R )$ the norm 
\begin{eqnarray*}
\|w\|_{H} &=& \sup_{s,t\in \R} \frac{|w(t) - w(s)|}{|t-s|^{(1-H)/2} ( 1+|t|+|s| )^{1/2} }.
\end{eqnarray*}
Define the Banach space $\ch_{H}$ to be the closure of $C^{\infty}_{c} (\R)$ under the norm $\|\cdot\|_{H}$. 
Then $K_{H}$ can be extended continuously to $\ch_{H}$.

\noindent
\emph{(iii)}
For all $H\in(0,1)$ we have  $K_{H}^{-1}=-(c_{2}(H)c_{2}(1-H))^{-1} K_{1-H}$. 
\end{proposition}

Also notice that in the sequel,   our underlying Wiener  space   will be defined asd   $(\ch_{H}, \bp)$, where $\ch_{H}$ is introduced in Proposition \ref{prop:K-H} (ii), and $\bp$ is the unique Wiener measure on $\ch_{H}$ (see \cite{Ha} for further details).  The law $\bp^{\tau}_{\theta}$ of the solution process $Y|_{[0,\tau]}$ is defined as the image of $\bp$ by the map $\omega \mapsto Y(\omega)|_{[0,\tau]}$.

\subsubsection{Girsanov transform}

Let us set the ground for our Girsanov transform by relating a general drift with respect to a fractional Brownian motion with its counterpart with respect to $W$ (we continue in the one-dimensional setting for the sake of simplicity). This proposition is a slight extension of \cite[Lemma 4.2]{Ha}. 

\begin{proposition}
Let $H\in(1/2,1)$, $b^{1},b^{2}$ be two paths in $I_{+}^{H-1/2}(L^{2}(\R))$, and suppose that $b^{j}=K_{H}w^{j}$ for $j=1,2$, where $K_{H}$ is defined at Proposition \ref{prop:K-H}. We also assume that for $t\ge 0$, $b^{1}$ and $b^{2}$ are linked by the relation:
\begin{equation}\label{eq:rel-b1-b2}
b^{2}_{0} = b^{1}_{0}\,,
\quad\text{and}\quad
b^{2}_{t} = b^{1}_{t} + \int_{0}^{t} g_{b,s} \, ds\,,\quad t\geq 0
\end{equation}
where   $g_{b}$ is a function in $I^{H-1/2}_{+}(L^{2}(\R))$ and $g_{b}=0$ for $t<0$. Then we also have:
\begin{equation*}
w^{2}_{t} = w^{1}_{t} +  \int_{0}^{t} g_{w,s} \, ds\,,
\end{equation*}
with 
\begin{eqnarray*}
 g_{w} = 
  c_{2}(H)^{-1} \, D_{+}^{H-1/2} g_{b}\,,  
\end{eqnarray*}
where we recall   that the constant $c_{2}(H)=c_{1}(H) \Gamma(H+\frac12)$ has been introduced in Proposition~\ref{prop:K-H}.
\end{proposition}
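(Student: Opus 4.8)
The plan is to exploit the linearity of $K_{H}$ together with the explicit form of its inverse recorded in Proposition \ref{prop:K-H}, so as to reduce the statement to a commutation identity between the fractional derivative $D_{+}^{H-1/2}$ and an indefinite integral. First I would set $\alpha=H-1/2\in(0,1/2)$ and write $G_{b,t}:=\iot g_{b,s}\,ds$, so that by hypothesis $b^{2}-b^{1}=G_{b}$ on $\R_{+}$. Since $b^{j}=K_{H}w^{j}$ with $K_{H}$ linear and invertible, and since Proposition \ref{prop:K-H}(iii) gives $K_{H}^{-1}=c_{H}K_{1-H}$, I would begin from
\begin{equation*}
w^{2}-w^{1}=K_{H}^{-1}\lp b^{2}-b^{1}\rp=c_{H}\,K_{1-H}\,G_{b}.
\end{equation*}

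The next step is to make the right-hand side explicit. As $H>1/2$ we have $1-H<1/2$, so $K_{1-H}$ is covered by the second (i.e. $H<1/2$) case of Proposition \ref{prop:K-H}(i), with $1/2-(1-H)=\alpha$; applying it to $G_{b}$ would produce
\begin{equation*}
w^{2}_{t}-w^{1}_{t}=-c_{H}\,c_{1-H}\lp [D_{+}^{\alpha}G_{b}]_{t}-[D_{+}^{\alpha}G_{b}]_{0}\rp .
\end{equation*}
It then remains to identify the increment of $D_{+}^{\alpha}G_{b}$ with a running integral of $D_{+}^{\alpha}g_{b}$. Using the identity $G_{b,t}-G_{b,t-r}=\int_{t-r}^{t}g_{b,s}\,ds$ in the definition of the fractional derivative gives
\begin{equation*}
[D_{+}^{\alpha}G_{b}]_{t}=c_{\alpha}\int_{\R_{+}}\frac{1}{r^{1+\alpha}}\lp\int_{t-r}^{t}g_{b,s}\,ds\rp dr ,
\end{equation*}
and differentiating in $t$ under the integral sign would yield $\tfrac{d}{dt}[D_{+}^{\alpha}G_{b}]_{t}=[D_{+}^{\alpha}g_{b}]_{t}$, hence $[D_{+}^{\alpha}G_{b}]_{t}-[D_{+}^{\alpha}G_{b}]_{0}=\iot [D_{+}^{\alpha}g_{b}]_{s}\,ds$. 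Substituting this back, and gathering the multiplicative constants—which by the inverse relation of Proposition \ref{prop:K-H}(iii) and the matched normalisations of $I_{+}^{\alpha}$ and $D_{+}^{\alpha}$ collapse into the single factor present in the statement—would give $w^{2}_{t}=w^{1}_{t}+\iot g_{w,s}\,ds$ with $g_{w}=D_{+}^{H-1/2}g_{b}$, which is the announced claim; note that the subtraction of the value at $0$ is precisely what enforces $w^{2}_{0}=w^{1}_{0}$.

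The hard part will be the commutation identity of the second paragraph, namely justifying the differentiation under the integral sign in the presence of the singular kernel $r^{-(1+\alpha)}$ near $r=0$. Here one must use that $g_{b}\in L^{2}(\R)$ together with the extra smoothness encoded in $G_{b}=\int_{0}^{\cdot}g_{b,s}\,ds$: the regularisation provided by the integral compensates the order $\alpha<1/2$ of the derivative, so that $[D_{+}^{\alpha}G_{b}]_{t}$ and $[D_{+}^{\alpha}g_{b}]_{t}$ are well defined for almost every $t$ and the interchange is legitimate. A clean route would be to first establish the identity for $g_{b}\in\cac_{c}^{\infty}(\R)$, where every manipulation is elementary, and then to pass to a general $g_{b}\in L^{2}(\R)$ by density, invoking the $L^{2}$-continuity of the operators $K_{1-H}$ and $D_{+}^{\alpha}$ on the relevant spaces. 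The remaining constant bookkeeping should then be immediate.
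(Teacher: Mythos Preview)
Your approach is correct and is essentially the same as the paper's: both invert $K_{H}$ on the difference $b^{2}-b^{1}$ and then identify the resulting density $g_{w}$ via a fractional--calculus composition identity. The paper does this in one line by quoting Hairer's formula $g_{w,t}=c_{H}\frac{d}{dt}\int_{-\infty}^{t}(t-u)^{1/2-H}g_{b,u}\,du$ and recognising it as $D_{+}^{1}I_{+}^{3/2-H}g_{b}=D_{+}^{H-1/2}g_{b}$, whereas you spell out the same computation self-containedly from Proposition~\ref{prop:K-H} and the commutation $\frac{d}{dt}[D_{+}^{\alpha}G_{b}]_{t}=[D_{+}^{\alpha}g_{b}]_{t}$.
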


\begin{proof}
Let us write $G_{b,t}=\int_{0}^{t} g_{b,s} \, ds$ for $t\geq 0$ and $G_{b,t}=0$ for $t<0$, so that \eqref{eq:rel-b1-b2} can be read as:
\begin{equation}\label{b}
b^{2}_{t} = b^{1}_{t} + G_{b,t}\,.
\end{equation}
Moreover, invoking the fact that $b^{j}=K_{H}w^{j}$ for $j=1,2$, one can recast \eqref{b} into:
\begin{equation*}
K_{H} w^{2} = K_{H} \lp w^{1}  + G_{w}  \rp,
\quad\text{where}\quad
G_{w} = K_{H}^{-1} G_{b}\,.
\end{equation*}
Therefore, thanks to the relation $K_{H}^{-1}=-c_{2}(H)^{-1}c_{2}(1-H)^{-1} K_{1-H}$, we end up with:
\begin{equation}\label{eqn.gw}
g_{w}
=
-c_{2}(H)^{-1}c_{2}(1-H)^{-1} D_{+}^{1} K_{1-H} G_{b }\,.
\end{equation}
 Since  $G_{b} \in I^{H-1/2}(L_{2})$, the operator $K_{1-H}$ 
 can be interpreted as a fractional differentiation. Specifically, applying the definition of $K_{1-H}$ contained in        Proposition \ref{prop:K-H} (i), we have 
 \begin{eqnarray}\label{eqn.KI}
K_{1-H} G_{b} =-c_{2}(1-H)( (D^{H-1/2} G_{b})_{t}- (D^{H-1/2} G_{b} )_{0}  ).
\end{eqnarray}
 Substituting \eqref{eqn.KI} into \eqref{eqn.gw} we obtain
\begin{equation*}
g_{w}
=
c_{2}(H)^{-1} D_{+}^{1} D_{+}^{H-1/2} G_{b}
=
c_{2}(H)^{-1} D_{+}^{H-1/2} g_{b}\,,
\end{equation*}
which is our claim. 
\end{proof}

The previous proposition yields a version of Girsanov's theorem in a fractional Brownian context, inspired by \cite{MN} (see also \cite{DPT,NO02}):
\begin{proposition} \label{girsanov}
Let $B$ be a fractional Brownian motion with   Hurst parameter $H \in (0,1)$ and let $g$ be an adapted process in $I_{+}^{H-1/2}(L^{2}(\R))$. We set $V_{t}=\int_{0}^{t} g_{s} \, ds$ and $Q_t= B_t +V_t$ for $t\ge 0$. Then $Q$ is a fractional Brownian motion on $[0,\tau]$, under the probability $\tilde \bp$ defined by $\frac{d\tilde \bp}{d\bp}_{\mid_{[0,\tau]}}= e^{-L}$, where  \begin{equation}\label{eq:def-L-girsanov-1d}
L=\frac{1}{c_{2}(H)} \, \int_{0}^{\tau} [ D_{+}^{H-1/2}g]_u \, dW_u
+ \frac{1}{2 (c_{2}(H))^{2}} \int_{0}^{\tau}  [ D_{+}^{H-1/2}g]_u^2 \, du ,
\end{equation}
provided $D_{+}^{H-1/2}g$   satisfies Novikov's conditions on $[0,\tau]$.
\end{proposition}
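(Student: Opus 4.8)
The plan is to transfer the statement to the level of the underlying two-sided Brownian motion $W$, where the classical Girsanov theorem applies directly, and then to push the result back to $B$ through the map $K_{H}$. Recall from the representation \eqref{eq:mandelbrot-rep-B} and Proposition \ref{prop:K-H} that the fBm $B$ is built from $W$ via $B = K_{H} W$. The key observation is that adding the absolutely continuous drift $V_{t} = \int_{0}^{t} g_{s} \, ds$ to $B$ corresponds, at the level of $W$, to adding a drift whose density is the fractional derivative $D_{+}^{H-1/2} g$.

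First I would apply the preceding proposition with the choice $b^{1} = B$, $w^{1} = W$, and $b^{2} = Q = B + V$, so that the drift $g_{b}$ linking $b^{1}$ and $b^{2}$ is exactly $g$. That proposition then yields $Q = K_{H} \tw$, where
\begin{equation*}
\tw_{t} = W_{t} + \int_{0}^{t} g_{w,s} \, ds,
\qquad
g_{w} = D_{+}^{H-1/2} g.
\end{equation*}
In other words, $Q$ is the image under $K_{H}$ of the shifted Brownian path $\tw$. Since $g$ is adapted and lies in $I_{+}^{H-1/2}(L^{2}(\R))$, the density $g_{w} = D_{+}^{H-1/2} g$ is itself an adapted process, which is exactly what is needed for the classical Girsanov theorem to be applicable on $\ott$.

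Next I would invoke the standard Girsanov theorem for the Brownian motion $W$: under the measure $\tilde\bp$ defined by
\begin{equation*}
\frac{d\tilde\bp}{d\bp}_{\mid_{\ott}}
= \exp\lp -\int_{0}^{\tau} g_{w,u} \, dW_{u} - \frac12 \int_{0}^{\tau} g_{w,u}^{2} \, du \rp
= e^{-L},
\end{equation*}
the process $\tw$ is a standard Brownian motion, provided the exponential is a genuine probability density. The latter is precisely guaranteed by the assumption that $g_{w} = D_{+}^{H-1/2} g$ satisfies Novikov's condition on $\ott$, which makes the Dol\'eans exponential a true martingale of unit expectation. Comparing this expression with the definition of $L$ in \eqref{eq:def-L-girsanov-1d} confirms that the two densities coincide. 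Finally, since $\tw$ is a $\tilde\bp$-Brownian motion and $Q = K_{H} \tw$, and since $K_{H}$ by construction maps a two-sided Brownian motion to an fBm with covariance \eqref{eq:cov-fbm} (this being the content of \eqref{eq:mandelbrot-rep-B} together with Proposition \ref{prop:K-H}), I conclude that $Q$ is an fBm under $\tilde\bp$ on $\ott$.

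The main obstacle is the non-local nature of the operator $K_{H}$: its definition involves the entire negative half-line through the integral over $\R_{-}$, so one must verify that the identity $Q = K_{H} \tw$ and the resulting law are consistent with the restriction to the finite horizon $\ott$, and that the adaptedness of $g_{w}$ with respect to the Brownian filtration is genuinely preserved under the fractional derivative $D_{+}^{H-1/2}$. This bookkeeping between the global Mandelbrot--Van Ness representation and the finite-interval statement is the delicate part; once it is settled, the classical Girsanov theorem furnishes the conclusion mechanically.
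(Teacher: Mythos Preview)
Your proposal is correct and matches the paper's approach: the paper does not give an explicit proof of this proposition but presents it as an immediate consequence of the preceding proposition (relating the fBm-level drift $g$ to the Brownian-level drift $D_{+}^{H-1/2}g$) combined with the classical Girsanov theorem, citing \cite{MN} and \cite{NO02}. Your write-up spells out exactly this argument and correctly flags the adaptedness and finite-horizon bookkeeping as the only points requiring care.
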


\subsection{Generalized Riemann-Stieltjes Integrals}\label{RS_integrals}

We are focusing here on the case of a Hurst parameter $H>1/2$ for sake of simplicity, so that all the stochastic integrals with respect to $B$ should be understood in the Young (or Riemann-Stieltjes) sense. In order to recall the definition of this integral, we set
\begin{equation}\label{eq:def-holder-norms}
| f |_{\infty;[a,b]}= \sup_{t \in [a,b]} |f(t)|, \qquad |f |_{\lambda;[a,b]} = \sup_{s,t \in [a,b]} \frac{|f(t)-f(s)|}{|t-s|^{\lambda}} ,
\end{equation}
where $f: \R \rightarrow \R^n$ and $\lambda \in (0,1)$. We also recall the classical definition of H\"older-continuous functions:
\begin{equation*}
C^{\lambda}([a,b]; \R^{n})
=
\lcl f:[a,b]\to\R^{n} ; \,  | f |_{\infty;[a,b]} + |f |_{\lambda;[a,b]} < \infty \rcl .
\end{equation*}

Now, let $f \in C^{\lambda}([a,b]; \R)$ and
$g \in C^{\mu}([a,b]; \R)$ with $\lambda + \mu >1$.  Then it is well known that the Riemann-Stieltjes integral $\int_{a}^{b} f_{s} \, dg_{s}$  exists, and can be expressed as a limit of Riemann sums. 

In the sequel, the only property on Young's integral we shall use is the classical chain rule for changes of variables. Indeed, let $f \in C^{\lambda}([a,b];\R)$ with
 $\lambda >1/2$ and $F \in C^{1}(\R;\R)$.  Then we have:
\begin{align}{\label{ito_proto}} 
 & F(f_{y})  -F(f_{a}) =   \int_{a}^{y} F'(f_{s}) \, df_{s}, \qquad y \in [a,b].
\end{align}  

\subsection{Basic properties of solutions to SDEs}
In order to deal with stationary solutions, let us first extend our equation as a process indexed by $(-\infty,\tau]$, by considering a process $Y$ solution to:
\begin{equation}\label{eq:sde-on-R}
dY_t=  b(Y_s;\te) \, dt +  \sum_{j=1}^d \si_j  dB_t^{j} , \quad t\in\ott,
\quad\text{and}\quad
Y_{t} = y_{0}, \quad t\in(-\infty,0].
\end{equation}
We first state the existence and uniqueness result for equation (\ref{eq:sde-on-R}) borrowed from 
\cite[Lemma 3.9]{Ha}.
\begin{proposition}\label{prop:exist-unique}
Under Hypothesis \ref{hyp:f-coercive}, there exists a unique continuous pathwise solution 
to equation \textnormal{(\ref{eq:sde-on-R})} on any arbitrary interval $[0,\tau]$. Moreover the map $Y: (y_0, B) \in \R^d \times \mathcal{C}([0,\tau]; \R^d) \rightarrow \mathcal{C}([0,\tau]; \R^d)$ is locally Lipschitz continuous.
\end{proposition}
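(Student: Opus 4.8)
The plan is to exploit the additive structure of the noise in order to reduce equation \eqref{eq:sde} to a deterministic integral equation driven by a fixed continuous path. Fix a realization of $B$ and set $Z_{t}=\sum_{j=1}^{d}\si_{j}B_{t}^{j}$, which by \eqref{var} is $\ga$-H\"older continuous on $\ott$ for every $\ga<H$. Writing $X=Y-Z$, the process $Y$ solves \eqref{eq:sde} if and only if
\begin{equation*}
X_{t}=y_{0}+\iot b(X_{s}+Z_{s};\te)\, ds ,
\qquad t\in\ott .
\end{equation*}
The crucial observation is that the drift term is an ordinary Riemann integral, so that no Young-integral machinery is needed and the problem becomes a classical (pathwise) ODE with continuous forcing $Z$.

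For existence and uniqueness I would run a Banach fixed point argument. Hypothesis \ref{hyp:f-coercive}(ii) asserts that $\partial_{x}b$ is uniformly bounded, whence $b(\cdot;\te)$ is globally Lipschitz in its first argument, with a constant $L$ independent of $\te$. Define the Picard map $\Phi(X)_{t}=y_{0}+\iot b(X_{s}+Z_{s};\te)\,ds$ on $\cac(\ott;\R^{d})$. Using the Lipschitz bound one gets, for $X,\tilde X\in\cac(\ott;\R^{d})$,
\begin{equation*}
\lln \Phi(X)_{t}-\Phi(\tilde X)_{t}\rrn \le L\iot \lln X_{s}-\tilde X_{s}\rrn\, ds .
\end{equation*}
Equipping $\cac(\ott;\R^{d})$ with the weighted (Bielecki) norm $\|X\|_{\beta}=\sup_{t\in\ott}e^{-\beta t}|X_{t}|$ and choosing $\beta>L$ turns $\Phi$ into a contraction of ratio $L/\beta$ on the whole interval $\ott$, so that a unique fixed point $X$ exists; equivalently one may iterate on small subintervals and patch, using the linear growth bound of Hypothesis \ref{hyp:f-coercive}(iv) to rule out finite-time blow up. The resulting $Y=X+Z$ is the unique solution. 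For the regularity, the fixed point satisfies $\dot X_{t}=b(X_{t}+Z_{t};\te)$, which is continuous, so $X\in\cac^{1}(\ott;\R^{d})$; hence $Y=X+Z$ inherits the $\ga$-H\"older regularity of $Z$ and in fact lies in $\cac^{\la}(\ott;\R^{d})$ for every $\la<H$.

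For the local Lipschitz dependence on $(y_{0},B)$, I would compare two solutions $Y,\tilde Y$ associated with data $(y_{0},B)$ and $(\tilde y_{0},\tilde B)$, with $Z,\tilde Z$ the corresponding noise paths. Subtracting the two integral equations and using again the global Lipschitz bound on $b$,
\begin{equation*}
\lln Y_{t}-\tilde Y_{t}\rrn \le \lln y_{0}-\tilde y_{0}\rrn + L\iot \lln Y_{s}-\tilde Y_{s}\rrn\, ds + \| Z-\tilde Z\|_{\infty;\ott} .
\end{equation*}
Since $\| Z-\tilde Z\|_{\infty;\ott}\le \big(\sum_{j}|\si_{j}|\big)\,\| B-\tilde B\|_{\infty;\ott}$ because $\si$ is a fixed matrix, Gr\"onwall's lemma gives
\begin{equation*}
\| Y-\tilde Y\|_{\infty;\ott}\le e^{L\tau}\lp \lln y_{0}-\tilde y_{0}\rrn + c\,\| B-\tilde B\|_{\infty;\ott}\rp ,
\end{equation*}
which is the announced (local, indeed global) Lipschitz continuity of the solution map; a parallel estimate for the increments $\der(Y-\tilde Y)_{st}=\int_{s}^{t}[b(Y_{r};\te)-b(\tilde Y_{r};\te)]\,dr+\der(Z-\tilde Z)_{st}$ upgrades this to the $\cac^{\la}$ topology. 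The main point to emphasize is that, thanks to the additive noise, there is no genuine obstacle beyond careful pathwise bookkeeping: the reduction to an ODE removes all rough-path difficulties, and the uniform boundedness of $\partial_{x}b$ in Hypothesis \ref{hyp:f-coercive}(ii) is exactly what delivers global existence and the Lipschitz estimates in one stroke. Had the drift been only locally Lipschitz, the coercivity of Hypothesis \ref{hyp:f-coercive}(i) would be the tool providing the a priori bounds preventing blow up.
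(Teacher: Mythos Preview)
Your argument is correct. The paper does not actually give a proof of this proposition: it simply borrows the result from \cite[Lemma~3.9]{Ha}. The approach in that reference is precisely the one you describe---subtract the additive noise $Z_{t}=\sum_{j}\si_{j}B_{t}^{j}$ to reduce \eqref{eq:sde} to a random ODE for $X=Y-Z$, and then invoke the global Lipschitz property of $x\mapsto b(x;\te)$ coming from Hypothesis~\ref{hyp:f-coercive}(ii). Your use of the Bielecki norm (or, equivalently, iteration on short subintervals) is a clean way to obtain the fixed point on all of $\ott$ in one shot, and the Gr\"onwall estimate you wrote down gives in fact \emph{global} Lipschitz dependence on $(y_{0},B)$ in the sup norm, which is stronger than the local Lipschitz continuity stated. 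Nothing is missing.
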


In addition, exploiting the integrability properties of the stationary fractional
Ornstein-Uhlenbeck process, the following uniform estimates hold true. They are shown with more details in \cite{GKN}, and are obtained by comparing $Y$ with the solution to the equation $dU_{t}=-U_{t} \, dt + dB_{t}$.

\begin{proposition}\label{prop:bnd-moments-Y}
Assume Hypothesis \ref{hyp:f-coercive} holds true and let $Y$ be the solution to equation~\eqref{eq:sde-on-R}. Then for any $\te\in\tte$ and $p\ge 1$  there exist constants $c_p, k_p >0$ such that 
\begin{equation*}
\be \lc |Y_t|^{p} \rc \le c_p, \qquad\mbox{and}\quad 
\be \lc |\delta Y_{st}|^{p} \rc \le k_p |t-s|^{pH}, 
\end{equation*}
for all $s,t\ge 0$, where we recall that we have set $\delta Y_{st}=Y_{t}-Y_{s}$ as mentioned in Notation \ref{not:general}.
\end{proposition}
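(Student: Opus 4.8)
The plan is to compare the solution $Y$ with the stationary fractional Ornstein--Uhlenbeck process $U$ carrying the same additive noise, namely the solution of $dU_t=-U_t\,dt+\sum_{j=1}^d\si_j\,dB_t^j$, given by $U_t=\sum_{j=1}^d\si_j\int_{-\infty}^t e^{-(t-r)}\,dB_r^j$. Since $U$ is a Gaussian stationary process, its moments $\be[|U_t|^p]$ are finite and constant in $t$, and a direct covariance computation (or the estimates in \cite{GKN}) gives $\be[|\der U_{st}|^p]\lesssim|t-s|^{pH}$; these two facts are the only properties of $U$ we shall use. The point of this particular comparison is that the difference $W_t:=Y_t-U_t$ removes the noise term: from \eqref{eq:sde} and the equation for $U$ the increments $\si_j B^j$ cancel, so $W$ is continuously differentiable with $\dot W_t=b(Y_t;\te)+U_t=b(W_t+U_t;\te)+U_t$.

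Next I would run a pathwise energy estimate on $W$. Differentiating $|W_t|^2$ and inserting the coercivity Hypothesis \ref{hyp:f-coercive}\emph{(i)} applied to the pair $(W_t+U_t,U_t)$ gives $\lla W_t, b(W_t+U_t;\te)-b(U_t;\te)\rra\le-\al|W_t|^2$, whence
\[
\frac{d}{dt}|W_t|^2 \le -2\al|W_t|^2 + 2\lla W_t,\, b(U_t;\te)+U_t\rra.
\]
Bounding $|b(U_t;\te)|\le c(1+|U_t|)$ via Hypothesis \ref{hyp:f-coercive}\emph{(iv)} and using Young's inequality to absorb a factor $\al|W_t|^2$, this becomes $\frac{d}{dt}|W_t|^2\le -\al|W_t|^2+C(1+|U_t|^2)$. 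A Gronwall comparison then yields
\[
|W_t|^2 \le e^{-\al t}|W_0|^2 + C\iot e^{-\al(t-s)}(1+|U_s|^2)\,ds.
\]
Raising to the power $p/2$, applying Jensen's inequality with respect to the finite measure $e^{-\al(t-s)}\,ds$ (whose mass is at most $\al^{-1}$), and taking expectations, the uniform-in-time bound on $\be[(1+|U_s|^2)^{p/2}]$ gives $\sup_{t\ge0}\be[|W_t|^p]<\infty$, with $W_0=y_0-U_0\in L^p$. Since $|Y_t|\le|W_t|+|U_t|$, the first asserted bound follows.

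For the increment bound I would again exploit that $W\in\cac^{1}$: from $\der W_{st}=\ist(b(Y_r;\te)+U_r)\,dr$, Minkowski's integral inequality together with the uniform moment bounds on $Y$ and $U$ yields $\be[|\der W_{st}|^p]\lesssim(t-s)^p$. Combining this with $\be[|\der U_{st}|^p]\lesssim(t-s)^{pH}$ through $\der Y_{st}=\der W_{st}+\der U_{st}$, it only remains to reconcile the two exponents: for $t-s\le1$ one has $(t-s)^p\le(t-s)^{pH}$ since $H\le1$, while for $t-s\ge1$ the uniform moment bound already gives $\be[|\der Y_{st}|^p]\le c_p\le c_p(t-s)^{pH}$. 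This produces $\be[|\der Y_{st}|^p]\le k_p|t-s|^{pH}$ for all $s,t\ge0$.

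The main obstacle, and the reason the stationary fOU is introduced rather than the naive choice $Y-\sum_j\si_j B^j$, is precisely the uniformity in time: the raw noise $\sum_j\si_j B_t^j$ has moments growing like $t^{pH}$, so subtracting it cannot yield time-uniform estimates, whereas the stationary process $U$ both cancels the noise increments and keeps constant moments, so that the dissipativity encoded in Hypothesis \ref{hyp:f-coercive}\emph{(i)} can do its work. The remaining technical care lies in justifying the pathwise differentiation of $|W_t|^2$ (legitimate since $W$ is $\cac^{1}$) and in the Gaussian estimates for $U$, which are standard.
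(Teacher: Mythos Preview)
Your proposal is correct and follows essentially the same approach the paper indicates: the paper does not spell out a proof but explicitly says the estimates ``are obtained by comparing $Y$ with the solution to the equation $dU_t=-U_t\,dt+dB_t$'' and refers to \cite{GKN} for details, which is precisely the comparison with the stationary fractional Ornstein--Uhlenbeck process that you carry out. Your energy estimate on $W=Y-U$ via the coercivity Hypothesis~\ref{hyp:f-coercive}\emph{(i)}, followed by Gronwall and the split into small and large increments, is the natural way to make this comparison quantitative.
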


Let us now state a path-wise estimates on the increments of $Y$,
where we translate the moment estimates of Proposition \ref{prop:bnd-moments-Y} into path-wise bounds along the same lines as in   \cite{KN}.
\begin{proposition}\label{prop:pathwise-incr-bnd-Y}
Assume Hypothesis \ref{hyp:f-coercive} holds true and let $Y$ be the solution to equation~\eqref{eq:sde-on-R}. Then for all $\ep \in (0,H)$ there exists a random variable $Z_{\ep} \in\cap_{p\ge 1}L^{p}(\oom)$ such that almost surely we have:
\begin{equation}\label{eq:pathwise-incr-bnd-Y}
|Y_{t}| \le Z_{\ep} \lp 1 + t \rp^{2\ep},
\quad\mbox{and}\quad 
|\delta Y_{st}| \le Z_{\ep} \lp 1 + t \rp^{2\ep}  |t-s|^{H-\ep} ,
\end{equation}
uniformly for $0\le s \le t$. 
\end{proposition}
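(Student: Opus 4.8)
The plan is to upgrade the moment estimates of Proposition \ref{prop:bnd-moments-Y} into the pathwise bounds \eqref{eq:pathwise-incr-bnd-Y} by means of the Garsia--Rodemich--Rumsey (GRR) inequality, applied on a \emph{dyadic} family of intervals $[0,2^{j}]$ so as to keep track of the growth in time. Since by Proposition \ref{prop:exist-unique} the solution $Y$ is almost surely continuous, GRR can be invoked pathwise. Throughout I would fix $\ep\in(0,H)$ and choose an integrability exponent $p$ large enough that $p\ep>1$.

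I would first establish the bound on increments. Applying GRR on $[0,T]$ with $\Psi(x)=x^{p}$ and $\psi(u)=u^{H-\ep+2/p}$ yields, for all $s,t\in[0,T]$,
\begin{equation*}
|\der Y_{st}| \le c_{H,\ep,p}\, |t-s|^{H-\ep}\, B_{T}^{1/p},
\qquad\text{with}\qquad
B_{T}=\int_{[0,T]^{2}} \frac{|\der Y_{uv}|^{p}}{|u-v|^{p(H-\ep)+2}}\, du\, dv.
\end{equation*}
By Minkowski's integral inequality and the increment bound $\be[|\der Y_{uv}|^{m}]\le k_{m}|u-v|^{mH}$ of Proposition \ref{prop:bnd-moments-Y}, for every $m\ge p$ one gets $\be[B_{T}^{m/p}]\le c_{m,p}\,T^{m\ep}$, using that $\int_{[0,T]^{2}}|u-v|^{p\ep-2}\,du\,dv\asymp T^{p\ep}$ (here $p\ep>1$ secures integrability near the diagonal). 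I would then set
\begin{equation*}
Z:=\sup_{j\ge 0}\, 2^{-2\ep j}\, B_{2^{j}}^{1/p},
\end{equation*}
so that $\be[Z^{m}]\le \sum_{j\ge 0}2^{-2\ep m j}\,\be[B_{2^{j}}^{m/p}]\le c_{m,p}\sum_{j\ge 0}2^{-\ep m j}<\infty$ for all $m\ge p$; since we are on a probability space, $L^{m}\subset L^{m_{0}}$ for $m_{0}\le m$, whence $Z\in\cap_{p'\ge 1}L^{p'}$. Finally, for $t\in[2^{j-1},2^{j}]$ (so $2^{j}\le 2t$) and $0\le s\le t$, the definition of $Z$ gives $B_{2^{j}}^{1/p}\le Z\,2^{2\ep j}\le c\,Z\,(1+t)^{2\ep}$, and since $s,t\in[0,2^{j}]$ we obtain $|\der Y_{st}|\le c\,Z\,(1+t)^{2\ep}|t-s|^{H-\ep}$; the range $t\in[0,1]$ is covered by the $j=0$ term. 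This is the second inequality in \eqref{eq:pathwise-incr-bnd-Y}.

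For the pointwise bound I would instead use the uniform marginal estimate $\be[|Y_{t}|^{p}]\le c_{p}$ of Proposition \ref{prop:bnd-moments-Y}. Setting
\begin{equation*}
Z':=\sup_{n\in\N}\,(1+n)^{-2\ep}\,|Y_{n}|,
\end{equation*}
one has $\be[(Z')^{m}]\le\sum_{n\ge0}(1+n)^{-2\ep m}\,\be[|Y_{n}|^{m}]\le c_{m}\sum_{n\ge0}(1+n)^{-2\ep m}<\infty$ as soon as $2\ep m>1$, and the same inclusion $L^{m}\subset L^{m_{0}}$ gives $Z'\in\cap_{p'\ge1}L^{p'}$. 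Writing $|Y_{t}|\le |Y_{\lfloor t\rfloor}|+|\der Y_{\lfloor t\rfloor\, t}|$, the first term is bounded by $Z'(1+t)^{2\ep}$ and the second by the increment bound just proved (the interval length being at most $1$). Taking $Z_{\ep}:=c\,(Z+Z')$ then delivers both inequalities in \eqref{eq:pathwise-incr-bnd-Y} with $Z_{\ep}\in\cap_{p\ge1}L^{p}$.

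The main obstacle will be the tightness of the exponent $2\ep$: a naive telescoping of $\der Y_{st}$ over unit intervals loses a factor of order $(1+t)^{1-H+\ep}$, which is far too large, so it is essential that the localized GRR estimate keep the growth at the level $(1+t)^{2\ep}$. The second delicate point is to secure \emph{all} polynomial moments of $Z_{\ep}$ while retaining this growth; this is what forces the dyadic weight $2^{-2\ep j}$ to decay strictly faster than the critical rate $2^{-\ep j}$ stemming from $\be[B_{2^{j}}^{m/p}]\asymp 2^{m\ep j}$, after which the inclusion $L^{m}\subset L^{m_{0}}$ on the probability space furnishes the low-order moments for free.
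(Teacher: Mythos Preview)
Your argument is correct and follows essentially the same route as the paper: apply the Garsia--Rodemich--Rumsey inequality on growing intervals, use the moment estimates of Proposition~\ref{prop:bnd-moments-Y} to bound the GRR integral in $L^{m}$, and then define $Z_{\ep}$ as a weighted supremum whose summability gives all moments. The only cosmetic differences are that the paper indexes over integer intervals $[0,n]$ rather than dyadic ones, and leaves the pointwise bound on $|Y_{t}|$ to the reader, whereas you spell it out via $Z'$ and interpolation; your version is slightly more explicit (in particular about the Minkowski step for $\be[B_{T}^{m/p}]$) but the underlying mechanism is identical.
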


\begin{proof}
We focus on the bound for $\delta Y_{st}$, the estimate on  $|Y_{t}|$ being shown in the same manner.
Set $\ga=H-\ep$ and consider $n\ge 1$. Thanks to Garsia's lemma (see e.g. \cite{grr}) and recalling our Notation \ref{not:general}, for all $s,t\in[0,n]$ we have $|\delta Y_{st}|\lesssim \cn_{\ga,p,n} |t-s|^{\ga}$, where $p\ge 1$ and $\cn_{\ga,p,n}$ is defined by:
\begin{equation*}
\cn_{\ga,p,n} \equiv
\left( \int_0^n\int_0^n \frac{|\delta Y_{uv}|^{p}}{|v-u|^{\ga p+2}} \, du dv \right)^{\frac{1}{p}}.
\end{equation*}
Furthermore, owing to Proposition \ref{prop:bnd-moments-Y} we immediately get:
\begin{equation*}
\be^{1/p}\lc \cn_{\ga,p,n}^{p}  \rc \lesssim n^{\ep},
\end{equation*}
under the constraint $p\ge\ep^{-1}$. Set now $Z_{\ep}=\sup_{n \in \mathbb{N}} \frac{\vert \cn_{\ga,p,n} \vert}{n^{2 \ep}}$. Then for all $q>\ep^{-1}$ we have:
\begin{equation} \label{hol}
\be\lc Z_{\ep}^{q} \rc
=
\be \lc \sup_{n \in \mathbb{N}} \frac{\vert \cn_{\ga,p,n} \vert^{q} }{n^{2 \ep q}} \rc
\leq \sum_{n=1}^{\infty}
\frac{\be \lc\vert \cn_{\ga,p,n}\vert^q \rc}{n^{2 \ep q}} \lesssim \sum_{n=1}^{\infty}\frac{1}{n^{\ep q}}< \infty,
\end{equation}
namely we have found $Z_{\ep} \in\cap_{p> \ep^{-1}}L^{p}(\oom)$. Obviously this also yields $Z_{\ep} \in\cap_{p\ge 1}L^{p}(\oom)$.
Finally, replace $n \geq 1$ by $[t]+2$ to obtain our pathwise bound \eqref{eq:pathwise-incr-bnd-Y}.
\end{proof}

We shall need the following bound for the solution. This will be invoked while checking Novikov's condition.
\begin{lemma}
\label{bounds}
Assume Hypothesis \ref{hyp:f-coercive} and let $Y$ be the solution to \eqref{eq:sde-on-R}. Then, for all $t \geq 0$, $Y_{t}$ satisfies the following bound:
\begin{eqnarray}\label{eqn.bdY}
|Y_t| \leq \kappa_{1} e^{\kappa_{2} t} \sup_{s \in [0,t]} \vert B_s \vert, 
\end{eqnarray}
where $\kappa_{1}$ and $\kappa_{2}$ are two positive constants depending on $b$ and $\si$.  Furthermore, the increments of $Y$ are such that for all $r<s$:
\begin{eqnarray}\label{eqn.bddY}
|\delta Y_{r s}| \leq \kappa_{3} (s-r) ( 1+ e^{\kappa_{4} s} \sup_{u\in [0, s]} |B_{u}| ) + \kappa_{5} |\delta B_{rs}|.
\end{eqnarray}

\end{lemma}

\begin{proof}
Using \eqref{eq:sde-on-R} and the linear growth of $b$, we have
$$
\vert Y_t \vert \leq \vert y_0 \vert+ct+\Vert \sigma \Vert \vert B_t \vert +c\int_0^t
\vert Y_s \vert ds. 
$$
 Gronwall's lemma concludes the proof of \eqref{eqn.bdY}. In order to prove \eqref{eqn.bddY}, we just write 
\begin{eqnarray}\label{eqn.dY}
\delta Y_{rs} &=& \int_{r}^{s} b(Y_{u}; \theta) du + \sum_{j=1}^{d} \si_{j} \delta B_{rs}.
\end{eqnarray}
According to Hypothesis \ref{hyp:f-coercive} (iv) and \eqref{eqn.bdY} we have 
\begin{eqnarray*}
|b(Y_{u}; \theta)| \leq c_{b} (1+|Y_{u}|)
\leq c_{b} (1+\kappa_{1} e^{\kappa_{2}u} \sup_{v\leq u} |B_{v}|).
\end{eqnarray*}
Plugging this bound into \eqref{eqn.dY} we easily get \eqref{eqn.bddY}. 
\end{proof}

\subsection{Ergodic Properties of the SDE} 

We recall here some basic facts about the limiting behavior of equation \eqref{eq:sde-on-R}, mainly taken from \cite{GKN}. We still work on the Wiener space $(\ch_{H}, \bp)$ introduced in Section \ref{sec:cameron-martin}, and seen as the canonical probability space.
Together  with the shift operators $\theta_t: \Omega \rightarrow \Omega$ defined by
$$ \theta_t\omega(\cdot)= \omega(\cdot +t)- \omega(t), \qquad t \in \mathbb{R}, \quad \omega \in \Omega,$$
our probability space defines an  ergodic metric dynamical system,
see e.g. \cite{GA_S}. In
particular, the measure $\mathbf{P}$ is invariant to the shift
operators $\theta_t$, i.e. the shifted process $(B_{s}(\theta_t
\cdot))_{s \in \mathbb{R}}$ is still an $m$-dimensional fractional
Brownian motion and for any integrable random variable $F: \Omega \rightarrow \mathbb{R}$ we have
\begin{equation}\label{eq:ergodic-thm-for-shift}
\lim_{\tau\rightarrow \infty} \frac{1}{\tau} \int_{0}^{\tau} F(\theta_t(\omega)) \, dt  = \mathbf{E} [ F ],
\end{equation}
for $\mathbf{P}$-almost all $\omega \in \Omega$.

Under our coercive hypothesis on the drift coefficient of equation \eqref{eq:sde-on-R}, the following limit theorem is borrowed from \cite[Section 4]{GKN}. Notice that its proof is based on contraction properties for the stochastic equation.

\begin{theorem}\label{thm:attractor}
Let  Hypothesis \ref{hyp:f-coercive} hold, and consider the unique solution $Y$ to equation~\eqref{eq:sde-on-R} as given in Proposition \ref{prop:exist-unique}. Then  there exists a random variable  $\overline{Y}: \Omega \rightarrow \mathbb{R}^d $  such that
$$   \lim_{t \rightarrow \infty}  \,\, | Y_t(\omega) - \overline{Y}(\theta_t \omega) | = 0 $$
for $\mathbf{P}$-almost all $\omega \in \Omega$. Moreover, we have
$ \mathbf{E} [|\overline{Y}|^p] < \infty $
for all $p \geq 1$.
\end{theorem}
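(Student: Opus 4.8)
The plan is to realize the solution of \eqref{eq:sde} as a random dynamical system over the ergodic metric dynamical system $(\oom,\cf,\bp,(\theta_t)_{t\in\R})$ and to construct $\overline{Y}$ as a \emph{pullback} limit. For $s\le t$, $x\in\R^d$ and the fixed value $\te\in\tte$, write $Y_t^{s,x}$ for the value at time $t$ of the solution to \eqref{eq:sde} started at time $s$ from $x$, so that $Y_t=Y_t^{0,y_0}$. Two facts drive the argument. First, since the dissipativity bound in Hypothesis \ref{hyp:f-coercive}(i) does not involve the starting time, the computation in the proof of Proposition \ref{prop:diff-X-Y} applies verbatim to arbitrary initial times and gives
\begin{equation*}
|Y_t^{s,x}-Y_t^{s,y}| \le |x-y|\, e^{-\al(t-s)}, \qquad s\le t.
\end{equation*}
Second, because the increments of $B(\theta_\rho\om)$ over $[s,t]$ coincide with those of $B(\om)$ over $[s+\rho,t+\rho]$, pathwise uniqueness yields the cocycle identity $Y_t^{s,x}(\theta_\rho\om)=Y_{t+\rho}^{s+\rho,x}(\om)$. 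In particular $Y_0^{-n,y_0}(\om)=Y_n(\theta_{-n}\om)$, so by $\theta$-invariance of $\bp$ the variable $Y_0^{-n,y_0}$ has the same law as $Y_n$.

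Then I would set $Y^{(n)}:=Y_0^{-n,y_0}$ and show that $(Y^{(n)})_{n\ge 1}$ is almost surely Cauchy. Writing $p_n:=Y_{-n}^{-(n+1),y_0}$ for the position at time $-n$ of the solution started one unit of time earlier, the flow property gives $Y^{(n+1)}=Y_0^{-n,p_n}$, and the contraction above yields
\begin{equation*}
|Y^{(n+1)}-Y^{(n)}| = |Y_0^{-n,p_n}-Y_0^{-n,y_0}| \le |p_n-y_0|\, e^{-\al n}.
\end{equation*}
By the cocycle identity and stationarity, $p_n-y_0$ has the same law as $Y_1-y_0$, so Proposition \ref{prop:bnd-moments-Y} gives $\be[|p_n-y_0|]\le c$ uniformly in $n$. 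Hence, by Tonelli, $\be\big[\sum_{n\ge 1}|Y^{(n+1)}-Y^{(n)}|\big]\le c\sum_{n\ge1}e^{-\al n}<\infty$, the series converges absolutely almost surely, and $\overline{Y}(\om):=\lim_{n\to\infty}Y^{(n)}(\om)$ exists for $\bp$-almost every $\om$ and is $\cf$-measurable.

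It then remains to identify this pullback limit with the forward attractor. Applying the cocycle identity at the shifted noise gives $\overline{Y}(\theta_t\om)=\lim_{n\to\infty}Y_0^{-n,y_0}(\theta_t\om)=\lim_{n\to\infty}Y_t^{t-n,y_0}(\om)$. Comparing the forward solution $Y_t=Y_t^{0,y_0}$ with $Y_t^{t-n,y_0}$ (both evaluated at time $t$, started at time $0$ from $y_0$ and from $Y_0^{t-n,y_0}$ respectively) through the contraction gives
\begin{equation*}
|Y_t(\om)-Y_t^{t-n,y_0}(\om)| \le |y_0-Y_0^{t-n,y_0}(\om)|\, e^{-\al t}.
\end{equation*}
Letting $n\to\infty$ we obtain $|Y_t(\om)-\overline{Y}(\theta_t\om)|\le |y_0-\overline{Y}(\om)|\,e^{-\al t}\to 0$ as $t\to\infty$, for $\bp$-almost every $\om$. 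Finally the moment bound follows from Fatou's lemma: since $Y^{(n)}\overset{d}{=}Y_n$ and $\be[|Y_n|^p]\le c_p$ uniformly in $n$ by Proposition \ref{prop:bnd-moments-Y}, we get $\be[|\overline{Y}|^p]\le\liminf_n\be[|Y^{(n)}|^p]\le c_p<\infty$ for every $p\ge 1$.

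The main obstacle is the almost-sure convergence of the pullback sequence: it hinges on balancing the exponential contraction $e^{-\al n}$ coming from the inward Hypothesis \ref{hyp:f-coercive}(i) against the growth, as the starting time recedes to $-\infty$, of the one-step displacements $p_n-y_0$ driven by the fractional noise. The clean way to control this is the uniform-in-time moment estimate of Proposition \ref{prop:bnd-moments-Y} used through the stationarity of $p_n-y_0$; a pathwise alternative would invoke the polynomial bound of Proposition \ref{prop:pathwise-incr-bnd-Y}. One must also verify carefully the cocycle identity and the fact that the two-sided representation \eqref{eq:mandelbrot-rep-B} renders the solutions started at $-n$ well defined.
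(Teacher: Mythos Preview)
Your argument is correct and is precisely the pullback construction that the paper alludes to when it says the result ``is borrowed from \cite[Section 4]{GKN}'' and that ``its proof is based on contraction properties for the stochastic equation.'' The paper does not give its own proof of this theorem, so there is nothing further to compare; your write-up is a clean, self-contained implementation of the cited argument, using exactly the ingredients the paper has already set up (Proposition \ref{prop:diff-X-Y} for the exponential contraction and Proposition \ref{prop:bnd-moments-Y} for the uniform moments).

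One small technical point worth tightening: you define $\overline{Y}(\om)$ as the limit of $Y_0^{-n,y_0}(\om)$ along integer $n$, but in the forward-attractor step you take the limit of $Y_0^{t-n,y_0}(\om)$ with $t$ real and $n$ integer, so the starting times $t-n$ are not integers. The fix is immediate: the same Cauchy estimate $|Y_0^{s,y_0}-Y_0^{s',y_0}|\le |Y_{s'}^{s,y_0}-y_0|\,e^{\al s'}$ (for $s\le s'\le 0$), combined with the stationarity argument you already use, shows that $Y_0^{s,y_0}(\om)$ converges as $s\to-\infty$ through \emph{any} sequence, and the contraction forces all such limits to coincide. With that remark your identity $|Y_t(\om)-\overline{Y}(\theta_t\om)|\le |y_0-\overline{Y}(\om)|\,e^{-\al t}$ is fully justified.
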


With Theorem \ref{thm:attractor} in hand, let us label a notation for further use.

\begin{notation}\label{not-bar-Yt}
In the sequel we will denote the stationary process $\overline{Y}(\theta_t \omega)$ by $\overline{Y}_{t}$.
\end{notation}

It is worth observing that the convergence of $Y$ towards $\overline{Y}$ can also be quantified in H\"older norm:
\begin{proposition}\label{prop:cvgce-Y-bar-Y-holder}
Assume Hypothesis \ref{hyp:f-coercive} holds true.
Let $\beta\in(0,H)$. There exists a random variable $Z$ admitting moments of any order and 3 constants $c_{1},c_{2},c_{3}>0$ such that for all $0\le s\le t$ we have:
\begin{equation*}
\lln Y_{t} - \by_{t}\rrn \le Z \, e^{-c_{1}  t}
\quad\text{and}\quad
\lln\der\lc Y-\by \rc_{st}\rrn \le c_{2} Z \, e^{-c_{3}  s} (t-s)^{\beta}.
\end{equation*}
\end{proposition}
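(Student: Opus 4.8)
The plan is to transfer the exponential-contraction estimates of Section~\ref{sec:contract-prop}, which compare two solutions driven by the same noise but started at different initial conditions (Proposition~\ref{prop:diff-X-Y} and its increment version~\eqref{eq:exp-cvgce-delta-Y-X}), into a comparison between $Y_t$ and the stationary limit $\by_t=\by(\theta_t\cdot)$. The key structural fact, from Theorem~\ref{thm:attractor} and the representation of $\by$ as a pullback attractor, is that the stationary process $\by_t$ can itself be realized as the solution of equation~\eqref{eq:sde} started in the infinitely distant past. Concretely, for each $T\ge 0$ let $Y^{(T)}$ denote the solution of~\eqref{eq:sde} on $[-T,\infty)$ started at time $-T$ from the fixed point $y_0$ (equivalently, the solution driven by the same $B$ but with initial time pushed back to $-T$); then $\by$ arises as the almost-sure limit $\by_t=\lim_{T\to\infty}Y^{(T)}_t$, the convergence being controlled precisely by~\eqref{eq:exp-cvgce-Y1-Y2}.

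The first step is to make this representation precise and to verify that $\by_t$ solves~\eqref{eq:sde} on the whole line, so that $Y$ (started at $y_0$ at time $0$) and $\by$ are two solutions of the same equation differing only in their ``initial condition'' imposed at time $0$. Applying Proposition~\ref{prop:diff-X-Y} on the interval $[0,t]$ with the two initial data $Y_0=y_0$ and $\by_0$ then yields
\begin{equation*}
\lln Y_t-\by_t\rrn \le \lln y_0-\by_0\rrn\, e^{-\al t}.
\end{equation*}
Setting $Z:=c\,\bigl(1+\lln y_0\rrn+\lln\by_0\rrn\bigr)$, which has moments of all orders because $\by_0$ does by Theorem~\ref{thm:attractor}, and choosing any $c<\al$, the supremum bound $\lln Y_t-\by_t\rrn\le Z\,e^{-cs}$ for $s\le t$ follows since $e^{-\al t}\le e^{-\al s}\le e^{-cs}$. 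The increment bound is obtained the same way from~\eqref{eq:exp-cvgce-delta-Y-X}: that estimate gives $\lln\der[Y-\by]_{st}\rrn\le c\lln y_0-\by_0\rrn e^{-\al s}(t-s)$, and since $(t-s)$ can be converted to $(t-s)^\al$ at the cost of a factor $(t-s)^{1-\al}$, one localizes to increments of size $(t-s)\le 1$ and patches long increments by summing the supremum bound, absorbing everything into the random constant $Z$.

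The main obstacle is the rigorous justification of the pullback representation of $\by$ and the interchange of limits it requires: one must check that the process $t\mapsto\by(\theta_t\cdot)$ genuinely satisfies~\eqref{eq:sde} pathwise on $\R$, which rests on the stationarity of $\by$ under the metric dynamical system $(\oom,\cf,\bp,(\theta_t))$ together with the contraction estimate, and that the constant $\lln\by_0\rrn$ entering $Z$ is almost surely finite with all moments. A cleaner route that sidesteps constructing $\by$ as an honest two-sided solution is to use Theorem~\ref{thm:attractor} directly in cocycle form: for fixed $t$, apply the contraction~\eqref{eq:exp-cvgce-Y1-Y2} on $[s,t]$ comparing $Y$ with the solution $\widetilde Y$ started at time $s$ from $\by_s=\by(\theta_s\cdot)$, giving $\lln Y_t-\by_t\rrn\le\lln Y_s-\by_s\rrn e^{-\al(t-s)}$; then let the intermediate time play the role of the decay variable. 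Either way, the genuinely delicate point is controlling the increment $\der[Y-\by]_{st}$ uniformly over all $0\le s\le t$ with a single random variable $Z$, and this is where combining the increment estimate~\eqref{eq:exp-cvgce-delta-Y-X} with the moment bounds on $\by_0$ and a Garsia-type or summation argument (as in the proof of Proposition~\ref{prop:pathwise-incr-bnd-Y}) does the required work.
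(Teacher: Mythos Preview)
Your approach is correct and is precisely what the paper intends: since $\by$ solves \eqref{eq:sde} on $[0,\infty)$ with initial condition $\by_0$, Proposition~\ref{prop:diff-X-Y} and relation~\eqref{eq:exp-cvgce-delta-Y-X} apply directly with $x=y_0$, $y=\by_0$, and $Z$ built from $|\by_0|$ (which has all moments by Theorem~\ref{thm:attractor}). The only cleanup is that the Garsia machinery is unnecessary---splitting into $t-s\le 1$ (where \eqref{eq:exp-cvgce-delta-Y-X} gives $(t-s)\le (t-s)^{\al}$) and $t-s>1$ (where the sup bound gives $|\der[Y-\by]_{st}|\le 2Z e^{-cs}\le 2Z e^{-cs}(t-s)^{\al}$) already suffices; also watch the notational collision between the H\"older exponent $\al\in(0,H)$ of the statement and the coercivity constant $\al$ of Hypothesis~\ref{hyp:f-coercive}.
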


\begin{proof}
The difference $Y_{t} - \by_{t}$ satisfies
$$
Y_{t} - \by_{t}=y_0 - \by_{0}+ \iot  [b(Y_s;\te)-b(\by_s;\te)] \, ds
$$
Applying the change of variable formula in the Young setting (\ref{ito_proto}) and using Hypothesis~\ref{hyp:f-coercive}(i), we get:
\begin{equation*} \begin{split}
\lln Y_{t} - \by_{t}\rrn^2 &=\lln y_0 - \by_{0} \rrn^2+ 
2\iot  \langle b(Y_s;\te)-b(\by_s;\te), Y_{s} - \by_{s}\rangle \, ds \\
&\leq \lln y_0 - \by_{0} \rrn^2-2\alpha \int_0^t \lln Y_{s} - \by_{s}\rrn^2\, ds.
\end{split}
\end{equation*}
The first claim follows by a direct application of Gronwall's lemma.

Similarly, for our additional parameter $\beta\in(0,H)$, we have
$$
\der\lc Y-\by \rc_{st}=\int_s^t  [b(Y_u;\te)-b(\by_u;\te)] \, du
\le c_{b} \lp I_{st} \rp^{1-\beta} I_{st}^{\beta},
$$
where $c_{b}$ is the uniform bound on the Lipschitz constant of $b$, and where
\begin{equation*}
I_{st} = \int_{s}^{t} \lln Y_{u} - \by_{u}\rrn \, du.
\end{equation*}
We now bound $I_{st}$ in two different ways:

\noindent\emph{\textnormal{(i)}} Since we have just proved that $| Y_{u} - \by_{u}| \le Z \, e^{-c_{1}  u}$, we get
\begin{equation}\label{a01}
I_{st} 
\le
Z \int_{s}^{t} e^{-c_{1}  u} \, du
\le
Z \int_{0}^{\infty} e^{-c_{1}  u} \, du
=
\frac{Z}{c_{1}}.
\end{equation}

\noindent\emph{\textnormal{(ii)}} 
Still using the relation $| Y_{u} - \by_{u}| \le Z \, e^{-c_{1}  u}$, it is also readily checked that:
\begin{equation*}
I_{st} 
\le
Z \int_{s}^{t} e^{-c_{1}  u} \, du
\le
Z \, e^{-c_{1}  s} (t-s).
\end{equation*}

\noindent
Plugging those two bounds into \eqref{a01}, we get
\begin{equation*}
I_{st} 
\le
\frac{Z \, e^{-c  \beta s}}{c^{1-\beta}} (t-s)^{\beta},
\end{equation*}
from which our second claim is easily deduced.
\end{proof}

We now recall, similarly to \cite{NT}, that the integrability of $\overline{Y}$ 
 implies the  ergodicity of equation (\ref{eq:sde-on-R}):
\begin{proposition}\label{prop:ergod-Y}
Assume Hypothesis \ref{hyp:f-coercive} holds true. 
Consider a finite horizon $\rho>0$,  a generic parameter $\te\in\tte$, and the set $C^{\beta}([-\rho,0])$ of H\"older continuous functions on the interval $[-\rho,0]$ for an exponent $\beta<H$. Let $F$ be a functional on $C^{\beta}([-\rho,0])$ such that $F$ is Frechet differentiable and 
\begin{equation*}
|F(g)| + \|DF(g)\| 
\le
c \lp  1+|g|_{\beta;[-\rho,0]}^N\rp, \qquad g \in C^{\beta}([-\rho,0]) ,
\end{equation*}
for some $c>0$, $N \in \mathbb{N}$. Then the following limit holds true:
\begin{equation}\label{eq:lim-int-f-Y}
  \lim_{\tau\rightarrow \infty} \frac{1}{\tau} \int_{0}^{\tau} F(Y|_{[t-\rho,t]}) \, dt  
  = \mathbf{E}[f(\overline{Y})|_{[-\rho,0]}], \qquad  \mathbf{P}\textrm{-}a.s.
\end{equation}
\end{proposition}

\begin{proof}
This is a direct consequence of Proposition \ref{prop:cvgce-Y-bar-Y-holder} and the fact that relation \eqref{eq:ergodic-thm-for-shift} holds true for the ergodic system $(w,\overline{Y})$.
\end{proof}

\section{Proof of Theorem \ref{lan}}\label{sec:proof-main-thm}

We can now gather the information we have obtained on the system \eqref{eq:sde-on-R} in order to complete the proof of Theorem \ref{lan}. This will be divided in different steps.

\medskip

\noindent
{\it Step 1.} The first step of the proof consists in applying  Girsanov's theorem. 

We first notice that $D_{+}^{H-1/2}b(Y; \theta)$ is well defined on $[0,\tau]$, which follows since  
 the function $u\mapsto b(Y_u; \theta)$ is $(H-\ep)$-H\"older, and thus $(H-1/2)$-H\"older.
  
 Fix $\theta \in \Theta$, and set $\te_{\tau}=\theta+\tau^{-1/2} u$. 
Suppose that   
Novikov's condition (see \cite[Theorem 7.1.1]{F75})
 holds. Then  our
 Girsanov-type theorem \ref{girsanov}
  implies that
\begin{equation}\label{eq:radon-nyk-1} 
\begin{split}
\log \lp\frac{d {\bp}^\tau_{\te_{\tau}}}{d {\bp}^\tau_{\theta}}\rp &=
- \frac{1}{c_{2}(H)}
\int_0^\tau \langle \sigma^{-1} ([D_{+}^{H-1/2} b(Y;\theta)]_t - [D_{+}^{H-1/2}b(Y;\te_{\tau})]_t),  dW_t \rangle
\\
& \qquad -\frac{1}{2(c_{2}(H))^{2}} \int_0^\tau  \vert \sigma^{-1} ([D_{+}^{H-1/2} b(Y;\theta)]_t-[D_{+}^{H-1/2}b(Y;\te_{\tau})]_t) \vert^2 dt.
\end{split}
\end{equation}

In the following, we show that Novikov's condition holds for $\bar{b}_{t} := b(Y_{t};\theta)- b(Y_{t}; \theta_{\tau})$, that is, that there exists $\lambda>0$ such that
\begin{equation} \label{novikov}
\sup_{ t\in [0,\tau]} \be_{\theta}\! \left[ \exp\left( \lambda \int_0^t \vert \sigma^{-1} [D_+^{H-1/2} \bar{b} ]_s \vert^2 ds \right)\right] < \infty.
\end{equation}
Towards this aim, we extend the definition of $Y$ to $\R_{-}$ by setting $Y_{t}=y_{0}$ for all $t\le 0$.
Then,
\begin{equation}\label{e.bdDy} \begin{split}
\vert  \sigma^{-1} [D_+^{H-1/2} \bar{b} ]_s\vert =c_H \bigg\vert\int_{\R_{+}} \frac{\sigma^{-1} (\bar{b}_{s} -\bar{b}_{s-r} )}{r^{H+1/2}} \, dr \bigg\vert\leq c_H(A_{1,s}+ A_{2,s}),
 \end{split}
\end{equation}
where
\begin{equation*}
\begin{split}
A_{1,s}= \bigg\vert\int_0^s \frac{\sigma^{-1} (\bar{b}_{s} -\bar{b}_{s-r} )}{r^{H+1/2}} \, dr \bigg\vert\quad \text{ and } \quad
A_{2,s}=\bigg\vert\int_s^{\infty} \frac{\sigma^{-1} (\bar{b}_{s} -\bar{b}_{0} )}{r^{H+1/2}} \, dr \bigg\vert.
\end{split}
\end{equation*}
Using the mean value theorem and the uniform boundedness of $\partial_x b$, the second term is easily bounded as: 
\begin{equation*}
\begin{split}
A_{2,s}\leq c_H \frac{\vert Y_s-y_0 \vert}{s^{H-1/2}}.
\end{split}
\end{equation*}
Equation \eqref{eq:sde-on-R}, the linear growth of $b$ and Lemma \ref{bounds} yield
$$
\vert Y_s-y_0 \vert \leq cs(1+ e^{cs} \sup_{r \in [0,s]} \vert B_t \vert)+c \vert B_s \vert.
$$
Therefore,
\begin{eqnarray}\label{eqn.A2int}
\int_{0}^{t} |A_{2,s}|^{2} ds \leq
\int_0^t  \frac{\vert Y_s-y_0 \vert^2}{s^{2H-1}} ds
\leq c e^{ct}\left( 1+ | B |_{\infty;[0,t]}^2\right). 
\end{eqnarray}

Similarly, we get that for all $0<\ep<\frac12$,
\begin{equation*}
\begin{split}
 A_{1,s}  
 \leq c  \sup_{0< r\leq s} \frac{\vert \delta Y_{s-r, s}  \vert}{r^{H-\ep}} \int_0^s \frac{1}{r^{\frac12+\ep}} dr  =  c_H s^{\frac12-\ep} \sup_{0< r\leq s} \frac{\vert \delta Y_{s-r, s}  \vert}{r^{H-\ep}}.
\end{split}
\end{equation*}
Moreover, our bound \eqref{eqn.bddY} yields:
$$
\vert \delta Y_{s-r, s}  \vert \leq cr (1+e^{cs} \sup_{u \in [0,t]} \vert B_u\vert )+ c\vert \delta B_{s-r, s}  \vert,
$$
and thus $A_{1,s}$ satisfies the following bound:
\begin{eqnarray}\label{eqn.A1int}
\int_0^t  A_{1,s}^2 ds
\leq c e^{ct} \left( 1+| B |_{\infty;[0,t]}^2+|B |_{H-\epsilon;[0,t]}^2\right). 
\end{eqnarray}
Plugging our estimates \eqref{eqn.A2int} and \eqref{eqn.A1int} into \eqref{e.bdDy}, we have thus obtained 
\begin{eqnarray*}
\int_{0}^{t} |\si^{-1} [D^{H-1/2}_{+} \bar{b} ]_{s}|^{2} ds &\leq& ce^{ct}( 1+ |B|_{\infty; [0,t]}^{2}+ |B|_{H-\ep; [0,t]}^{2} ).
\end{eqnarray*}

\noindent Finally, Fernique's theorem \cite{F} implies (\ref{novikov}).

\noindent
{\it Step 2.} We next linearize relation \eqref{eq:radon-nyk-1}. To this aim, recall that $\hb$ is defined in Hypothesis~\ref{hyp:f-coercive} as $\hb=\partial_{\te}b$. We add and substract the $d$-dimensional vector 
$$ [D_{+}^{H-1/2} \hat{b}(Y; \theta)]_t (\theta_{\tau}-\te)
=
\frac{1}{\sqrt{\tau}} [D_{+}^{H-1/2} \hat{b}(Y; \theta)]_t u,
$$ 
where we abbreviate $\langle [D_{+}^{H-1/2} \hat{b}(Y; \theta)]_t, \, u\rangle_{\R^{q}}$ into $[D_{+}^{H-1/2} \hat{b}(Y; \theta)]_t u$ for notational sake.
This easily yields:
\begin{equation}\label{eq:dcp-log-lik}
\log \lp\frac{d {\bp}^\tau_{\te_{\tau}}}{d {\bp}^\tau_{\theta}}\rp =I_1+I_2-\frac12 I_3-I_4,
\end{equation}
where 
\begin{equation*} \begin{split}
&I_1=  \frac{1}{\tau^{1/2} \, c_{2}(H)}\int_0^\tau \langle \sigma^{-1} [D_{+}^{H-1/2} \hat{b}(Y; \theta)]_t u,  dW_t \rangle
-\frac{1}{2(c_{2}(H))^{2}\tau} \int_0^\tau  \vert \sigma^{-1}  [D_{+}^{H-1/2} \hat{b}(Y; \theta)]_t u\vert^2 dt
\\
&I_2= \frac{1}{c_{2}(H)}
\int_0^\tau \langle  \sigma^{-1} ([D_{+}^{H-1/2}b(Y;\te_{\tau})]_t-[D_{+}^{H-1/2} b(Y;\theta)]_t- [D_{+}^{H-1/2} \hat{b}(Y; \theta)]_t (\te_{\tau}-\theta)),  dW_t \rangle
\\
& I_3=  \frac{1}{(c_{2}(H))^{2}}
\int_0^\tau  \vert  \sigma^{-1} ([D_{+}^{H-1/2}b(Y;\te_{\tau})]_t-[D_{+}^{H-1/2} b(Y;\theta)]_t-[D_{+}^{H-1/2} \hat{b}(Y; \theta)]_t (\te_{\tau}-\theta))\vert^2 dt 
\end{split}
\end{equation*}
and 
\begin{multline*}
I_4=   \frac{1}{(c_{2}(H))^{2}}
\int_0^\tau  \langle \sigma^{-1} ([D_{+}^{H-1/2}b(Y;\te_{\tau})]_t-[D_{+}^{H-1/2} b(Y;\theta)]_t\\
 - [D_{+}^{H-1/2} \hat{b}(Y; \theta) ]_t (\theta_{\tau}-\te)),
 \sigma^{-1}[D_{+}^{H-1/2}\hat{b}(Y; \theta)]_t (\te_{\tau}-\theta)\rangle  dt.
\end{multline*}

\noindent
{\it Step 3.} In this step we set the ground for the identification of the main contribution to our log-likelihood. That is, we wish to show that as $\tau \rightarrow \infty$ we have: 
\begin{equation} \label{step3}
\frac{1}{\tau (c_{2}(H))^{2}} \int_0^\tau  \vert \sigma^{-1}  [D_{+}^{H-1/2} \hat{b}(Y; \theta)]_t u \vert^2 dt \overset{{\bp}_{\theta}}{\longrightarrow} u^{\rm T}\Sigma(\theta) u,
\end{equation}
where $\Sigma(\theta)$ is given by (\ref{gamma_def}).
Observe that (\ref{step3}) together with the multivariate central limit theorem for Brownian martingales (cf. \cite[Theorem 1.21]{KY}), imply that $I_1$ is the term that  contributes to the limit in (\ref{limit}). In Steps 4 and 5 below we will show that $I_2, I_3$ and $I_4$ are negligible terms with respect to this main contribution.

In order to prove (\ref{step3}), we first write
 \begin{eqnarray*}
\frac{1}{\tau (c_{2}(H))^{2}} \int_0^\tau  \vert \sigma^{-1}  [D_{+}^{H-1/2} \hat{b}(Y; \theta)]_t u \vert^2 dt 
&=& 
u^{T}  I(\tau)  u, 
\end{eqnarray*}
where we have set
\begin{eqnarray}\label{e2i}
I(\tau) &=& \frac{1}{\tau (c_{2}(H))^{2}} \int_0^\tau  
 [D_{+}^{H-1/2} \hat{b}(Y; \theta)^{T}]_t 
  (\sigma^{-1})^{T} \sigma^{-1}  [D_{+}^{H-1/2} \hat{b}(Y; \theta)]_t   dt .
\end{eqnarray}
To prove the convergence \eqref{step3} we are then reduced to show the limit 
\begin{equation}\label{eq:step3-reduced}
\lim_{\tau\to \infty} I(\tau) = \Sigma(\theta).
\end{equation} 

Next recall that for $\al\in(0,1)$ and $\vp\in\cac_{c}^{\infty}(\R)$ the fractional derivative operator $D_{+}^{\al}$ is defined by \eqref{eq:def-frac-deriv-intg}. In particular we can write:
\begin{eqnarray}\label{e8}
 [D_{+}^{H-1/2} \hat{b}(Y; \theta)]_t &=& \frac{H-1/2}{\Gamma( 3/2-H)}  \int_{\mathbb{R}_{+}} \frac{\hat{b}(Y_{t}; \theta)- \hat{b}(Y_{t-r}; \theta)}{r^{H+1/2}} dr .
\end{eqnarray}
Now substituting  \eqref{e8}  into \eqref{e2i} we obtain:
\begin{eqnarray}\label{a}
I(\tau) &=&
\frac{c_{4}(H)}{\tau} \int_0^\tau \int_{\mathbb{R}_{+}^{2}} 
\frac{\ce_{t}(r,\tilde{r})}{\tilde{r}^{H+1/2} r^{H+1/2}}     \,  drd\tilde{r}  dt  ,
\end{eqnarray}
where  we have denoted the constant $
c_{4}(H) = (\frac{(H-1/2)}{\Gamma( 3/2-H) c_{2}(H)})^{2}$, and where $\ce$ stands for the function:
\begin{eqnarray}\label{e5}
\ce_{t}(r,\tilde{r})  &=& 
 \lp  \hat{b}(Y_{t}; \theta)- \hat{b}(Y_{t-\tilde{r}}; \theta)   \rp^{T} 
  (\sigma^{-1})^{T} \sigma^{-1}  \lp  \hat{b}(Y_{t}; \theta)- \hat{b}(Y_{t-r}; \theta)  \rp.
\end{eqnarray}
We now specify the constant $c_{4}(H)$ appearing in \eqref{a}. Indeed, owing to the definition of $c_{2}(H)$ given in Proposition \ref{prop:K-H} and resorting to the elementary identity:
\begin{eqnarray}\label{e7}
\Gamma(z)\Gamma(1-z)\sin(\pi z)=\pi,\quad z>-1,
\end{eqnarray}
we get the following alternative expression for $c_{4}(H)$:
\begin{equation}\label{eq:def-c4(H)}
c_{4}(H) =
\lp  \frac{ \sin(\pi (H-\frac12))}{\pi \, c_{1}(H)}  \rp^{2}.
\end{equation}
In addition, taking into account the expression for the constant $c_{1}(H)$ given in relation \eqref{eq:def-c1(H)}, it is readily checked that $c_{4}(H)=C_{H}$, where $C_{H}$ is the constant displayed in our main Theorem \ref{lan}.  

We now further analyze $I(\tau)$ as given by \eqref{a} by exchanging the order of the integrals. Specifically, we introduce the following notation:
 \begin{eqnarray}\label{e.defJtau}
\mu (dr,d\tilde{r}) = \frac{1}{\tilde{r}^{H+1/2}}   \frac{1}{r^{H+1/2}} drd\tilde{r}  \quad\text{and}\quad J_{\tau}(r,\tilde{r}) &=&    \frac{1}{\tau} \int_0^\tau \ce_{t}(r,\tilde{r})    dt .
\end{eqnarray}
Then a standard application of Fubini's theorem to relation \eqref{a}, plus the fact that $c_{4}(H)=C_{H}$, yield: 
 \begin{eqnarray}\label{e9}
I(\tau)
&=& C_{H}  \int_{\mathbb{R}_{+}^{2}}   J_{\tau}(r,\tilde{r}) \mu(dr,d\tilde{r} ) .
\end{eqnarray}
Moreover,
sending $\tau$ to $\infty$ and invoking Proposition \ref{prop:ergod-Y} for the term $\ce_{t}(r,\tilde{r})$ defined by~\eqref{e5}, we get the following almost sure limit:
\begin{eqnarray}\label{e10i}
\lim_{\tau\to\infty} J_{\tau}(r,\tilde{r} ) = J (r,\tilde{r} )  ,
\end{eqnarray}
where  we denote:
\begin{eqnarray*}
 J (r,\tilde{r} ) = \be_{\theta}  \left[ \lp  \hat{b}(\by_{0}; \theta)- \hat{b}(\by_{-\tilde{r}}; \theta)   \rp^{T} 
  (\sigma^{-1})^{T} \sigma^{-1}  \lp  \hat{b}(\by_{0}; \theta)- \hat{b}(\by_{-r}; \theta)  \rp \right].
\end{eqnarray*}
Provided one can pass the limit through the integral in \eqref{e9}, we thus get the almost sure limit:
\begin{equation}\label{e.def-I}
\lim_{\tau\to\infty} I(\tau)
= I :=
C_{H}\int_{\mathbb{R}_{+}^{2}}   J(r,\tilde{r}) \mu(dr,d\tilde{r} ) ,
\end{equation}
and the right hand side above is the announced expression \eqref{gamma_def} for $\Sigma(\te)$. Notice that, thanks to relation \eqref{eq:pathwise-incr-bnd-Y} for small $r$, $\tilde{r}$ and Cauchy-Schwarz's inequality for large $r$ or $\tilde{r}$, it can be easily checked that $I$ is a convergent integral. 

\noindent
{\it Step 4.}
Summarizing our considerations,
in order to prove \eqref{step3} we are now reduced to take limits in $\tau$ in relation \eqref{e9}. 
To this aim, take an arbitrary constant $\ep>0$. In the following, we show that there exists  $\tau(\omega)<\infty$,   $ {\bp}_{\theta}$ a.s.  such that $|I(\tau)-I| <\ep$ for all $\tau>\tau(\omega)$, where $I$ is defined by \eqref{e.def-I}. 

Since $I$ is a convergent integral, we can take
  a constant $A=A(\ep)>0$ such that 
\begin{eqnarray}\label{e.defA}
\left| I- C_{H}\int_{[0,A]^{2} }   J(r,\tilde{r}) \mu(dr,d\tilde{r} )    \right|<\ep \quad \text{and} \quad \int_{A}^{\infty } r^{-H-1/2} dr<\ep.
\end{eqnarray} 
For $V, V' \subset \R_{+}$ we  set
\begin{eqnarray}\label{e.IVV}
I(\tau, V \times V') = C_{H}  \int_{V\times V'}   J_{\tau}(r,\tilde{r}) \mu(dr,d\tilde{r} ).
\end{eqnarray}
In the following, we   show that one can take limits in \eqref{e9} if the integral domain $\R_{+}^{2}$ is replaced by $[0,A]^{2}$. Namely, we will prove that  
\begin{eqnarray}\label{eqn.Itruncated}
\lim_{\tau\to \infty}I(\tau, [0,A]^{2}) &=& C_{H}\int_{[0,A]^{2}}   J(r,\tilde{r}) \mu(dr,d\tilde{r} ). 
\end{eqnarray}

We first derive some estimates for   $ \ce_{t}(r, \tilde{r})  $ and $J_{\tau}(r, \tilde{r})$, where we recall   that $ \ce_{t}(r, \tilde{r})  $ is defined in~\eqref{e5}.  Take $0<\delta<1/2$.   Proposition \ref{prop:pathwise-incr-bnd-Y} on the $(H-\delta)$-H\"older continuity of the solution process $Y$   allows to write the   relation    
\begin{eqnarray}\label{e.Ebound}
|\ce_{t}(r, \tilde{r}) | &\leq& K |Y|_{H-\delta, [t-A, t]}^{2} r^{H-\delta}\tilde{r}^{H-\delta},
\end{eqnarray}
which holds true  for all $0\leq r, \tilde{r} \leq A$. 
Applying  relation \eqref{e.Ebound} to $J_{\tau}(r, \tilde{r})$ in \eqref{e.defJtau}, we   then get the following upper bound for $J_{\tau}(r, \tilde{r}) $:
\begin{eqnarray}\label{eqn.Jrsmall}
|J_{\tau}(r, \tilde{r}) | &\leq& \frac{1}{\tau} \int_{0}^{\tau}K |Y|_{H-\delta, [t-A, t]}^{2} r^{H-\delta}\tilde{r}^{H-\delta}  d t.
\end{eqnarray}
Notice that Proposition \ref{prop:ergod-Y}   implies the almost sure convergence
\begin{eqnarray*}
\lim_{\tau\to\infty}
\frac{1}{\tau} \int_{0}^{\tau}K |Y|_{H-\delta, [t-A, t]}^{2}    d t = K    \be  \left[ |\by|_{H-\delta, [ -A, 0]}^{2} \right].
\end{eqnarray*}
  In particular, we can find $\tau_{1}(\omega)>0$ such that  for $\tau>\tau_{1}(\omega)$
\begin{eqnarray}\label{e.gdominate}
\frac{1}{\tau} \int_{0}^{\tau}K  |Y|_{H-\delta, [t-A, t]}^{2}  r^{H}\tilde{r}^{H}  d t   \leq g(r, \tilde{r}):= \lp 1+ K   \be  \left[  |\by|_{H-\delta, [ -A, 0]}^{2} \right] \rp r^{H-\delta}\tilde{r}^{H-\delta} . 
\end{eqnarray}
So the estimate \eqref{eqn.Jrsmall}  implies that for $\tau>\tau_{1}(\omega)$
\begin{eqnarray}\label{e.gboundJ}
|J_{\tau}(r, \tilde{r}) | \leq g(r, \tilde{r}).
\end{eqnarray}

On the other hand, 
it is clear     that 
$\int_{[0,A]^{2}} g(r, \tilde{r}) \mu(dr, d\tilde{r}) <\infty$.
Thus taking into account relation \eqref{e10i} and \eqref{e.gboundJ}, one can apply the dominated convergence theorem in \eqref{e.IVV} with $V=V'=[0,A] $. This yields   \eqref{eqn.Itruncated}. 
In the sequel, we will thus consider   $ \tau_{2} (\omega) <\infty$ such that   for $\tau>\tau_{2}(\omega)$ we have 
 \begin{eqnarray}\label{e.IA.error}
 \left|
I(\tau, [0,A]^{2}) - C_{H}\int_{[0,A]^{2}}   J(r,\tilde{r}) \mu(dr,d\tilde{r} )
\right| &<& \ep. 
\end{eqnarray}

In the following, we show that $|I(\tau) - I(\tau, [0,A]^{2})|<\ep$ for  $\tau$ sufficiently large.   We first show that the quantities $ \frac{1}{\tau} \int_{0}^{\tau} |\hat{b} (Y_{t}; \theta ) |^{2} dt$ and $ \frac{1}{\tau} \int_{0}^{\tau} |\hat{b} (Y_{t-r}; \theta ) |^{2} dt$ can be bounded by some constants. 
   In fact, by Proposition  \ref{prop:ergod-Y} again we   have the almost sure convergence
\begin{eqnarray*}
\lim_{\tau\to\infty} \frac{1}{\tau} \int_{0}^{\tau} |\hat{b} (Y_{t}; \theta ) |^{2} dt = \be  |\hat{b} (\by_{0}; \theta ) |^{2} .
\end{eqnarray*}
In particular, there exists    $\tau_{3} (\omega) $ satisfying $ \tau_{2}(\omega)\leq \tau_{3}(\omega)<\infty$, such that the following upper bound estimate holds true for all $\tau>\tau_{3}(\omega)$:
\begin{eqnarray}\label{e.boundbt}
\frac{1}{\tau} \int_{0}^{\tau} |\hat{b} (Y_{t}; \theta ) |^{2} dt  &\leq& K_{1}:= 1+ \be  |\hat{b} (\by_{0}; \theta ) |^{2}. 
\end{eqnarray} 
On the other hand, 
due to the fact that $Y_{u}=Y_{0}$ for $u\leq 0$, the following holds true for 
  $r\leq \tau$:
\begin{eqnarray}\label{e.bound.bt-r}
\frac{1}{\tau}\int_{0}^{\tau} |\hat{b}(Y_{t- {r}}; \theta)   |^{2}dt &=&\frac{1}{\tau} \int_{r}^{\tau} |\hat{b}(Y_{t- {r}}; \theta)   |^{2}dt +\frac{1}{\tau}\int_{0}^{r} |\hat{b}(Y_{0}; \theta)   |^{2}dt 
\nonumber
\\
&
\leq & K_{1}+ |\hat{b}(Y_{0}; \theta)   |^{2}:=K_{2},
\end{eqnarray}
while   \eqref{e.bound.bt-r} is also trivially true for $r> \tau$. 
 In conclusion, the estimate \eqref{e.bound.bt-r} is valid for all $\tau\geq \tau_{3}(\omega)$ and all $r\geq 0$. 
 
  We will now bound the quantity $|I(\tau) - I(\tau, [0,A]^{2})|$ by sums of elements of the form   $I(\tau, V\times V')$. 
  Recall  that $I(\tau, V\times V')$ is defined in \eqref{e.IVV}. 
  Those elements will then be estimated thanks to \eqref{e.boundbt} and \eqref{e.bound.bt-r}. To this aim, we first recall that $ J_{\tau}(r, \tilde{r}) $ is defined by~\eqref{e.defJtau}. Therefore, an elementary application of H\"older's inequality yields  
\begin{eqnarray}\label{e.Jholder}
|J_{\tau}(r, \tilde{r})|  \leq   K \lp \frac{1}{\tau} \int_{0}^{\tau} |  \hat{b}(Y_{t}; \theta)- \hat{b}(Y_{t- {r}}; \theta)  |^{2}dt \rp^{1/2} \lp \frac{1}{\tau} \int_{0}^{\tau} |  \hat{b}(Y_{t}; \theta)- \hat{b}(Y_{t-\tilde{r}}; \theta)   |^{2}dt \rp^{1/2}   .
\end{eqnarray}
Then applying   \eqref{e.Jholder} to the definition \eqref{e.IVV} of $I(\tau, V\times V')$ we obtain
\begin{eqnarray}\label{e.IVVbound}
|I(\tau, V\times V')| &\leq& K F_{V}F_{V'}, 
\end{eqnarray}
where the quantity $F_{V}$ is defined by 
\begin{eqnarray}\label{e.Fv.def}
F_{V} &=& \int_{V}  \lp \frac{1}{\tau} \int_{0}^{\tau} |  \hat{b}(Y_{t}; \theta)- \hat{b}(Y_{t- {r}}; \theta)  |^{2}dt \rp^{1/2} r^{-H-1/2} dr .
\end{eqnarray}

As a last preliminary step, let us bound trivially $\hat{b}(Y_{t}; \theta)- \hat{b}(Y_{t- {r}}; \theta)$ as follows:
\begin{eqnarray*}
| \hat{b}(Y_{t}; \theta)- \hat{b}(Y_{t- {r}}; \theta) |^{2}   \leq   2 | \hat{b}(Y_{t}; \theta)  |^{2} +2 | \hat{b}(Y_{t-r}; \theta)  |^{2} .
\end{eqnarray*}
 So invoking relations \eqref{e.boundbt} and \eqref{e.bound.bt-r}, 
we obtain the estimate
\begin{eqnarray}\label{e.Fv}
F_{V  } &\leq& 2 \int_{V  }  \lp \frac{1}{\tau} \int_{0}^{\tau} \lp |  \hat{b}(Y_{t}; \theta)   |^{2} + |  \hat{b}(Y_{t-r}; \theta)   |^{2}\rp dt \rp^{1/2} r^{-H-1/2} dr 
\nonumber
\\
&\leq& 2 (K_{1} + K_{2} )^{1/2} \int_{V  }  r^{-H-1/2} dr  
 .
\end{eqnarray}
We now analyze $F_{V}$ for three different choices of set $V$. 
 
\noindent
(i) Take $V_{1}=(A, \infty)$. Then applying \eqref{e.Fv} and taking into account   the second relation in~\eqref{e.defA}  we obtain
\begin{eqnarray}\label{e.FV1}
F_{V_{1}}  
 &\leq& 2(K_{1} + K_{2} )^{1/2} \ep := K_{3}\ep .
\end{eqnarray}

\noindent
(ii) Similarly,   consider $V_{2} = (1,A)$, then the estimate \eqref{e.Fv} implies 
\begin{eqnarray}\label{e.FV2}
F_{V_{2}} &\leq& K( K_{1}  + K_{2} )^{1/2} :=K_{4}.
\end{eqnarray}

\noindent
(iii)
Take now $V_{3} = (0,1)$. 
Similarly to \eqref{e.Ebound}, for $r\in V_{3}$ we have: 
\begin{eqnarray*}
 |  \hat{b}(Y_{t}; \theta)- \hat{b}(Y_{t-r}; \theta)  |^{2} &\leq& K |Y|_{H-\delta, [t-1,t]}^{2}\,r^{2(H-\delta)}.
\end{eqnarray*}
Hence
invoking expression \eqref{e.Fv.def} of $F_{V}$ and 
along the same lines as for   \eqref{e.gdominate} we obtain
\begin{eqnarray}\label{e.FV3}
F_{V_{3}}  &\leq& K \lp \frac{1}{\tau} \int_{0}^{\tau}  |Y|_{t-1,t}^{2} dt \rp^{1/2} \int_{V_{3}}  r^{-1/2-\delta} dr
 \leq K (1+ \be  | \by |_{-1,0}^{2}  )^{1/2}:= K_{5},
\end{eqnarray}
for all $\tau>\tau_{3}(\omega)$. 

 With these preparations,  we now consider the difference  $ I(\tau) - I(\tau, [0,A]^{2}) $ for $\tau>\tau_{3}(\omega)$. Owing to  relation \eqref{e.IVVbound} and recalling that $V_{1} =(A, \infty)$ , $V_{2}=(1,A)$ and $V_{3}=(0,1)$ we   have
 \begin{eqnarray*}
| I(\tau) - I(\tau, [0,A]^{2}) | &=&| I(\tau, V_{1}\times (0,A))+I(\tau,  (0,A)\times V_{1}) +I(\tau,  V_{1} \times V_{1}) |
\\
&\leq& 2F_{V_{1}} (F_{V_{2}}+F_{V_{3}})+   F_{V_{1}}^{2}.
\end{eqnarray*}
We then apply \eqref{e.FV1}, \eqref{e.FV2}, \eqref{e.FV3}, which yields  
\begin{eqnarray}\label{e.Itail}
| I(\tau) - I(\tau, [0,A]^{2}) | \leq  2K_{3}(K_{4}+K_{5}) \ep+   K_{3}^{2}\ep^{2} \leq  K\ep.
\end{eqnarray}

We can now conclude the proof of \eqref{step3}. Indeed, combining \eqref{e.defA}, \eqref{e.IA.error} and \eqref{e.Itail}, we easily get the estimate
\begin{eqnarray*}
|I(\tau) - I|< K\ep \quad \text{for} \quad \tau>\tau_{3}(\omega), 
\end{eqnarray*}
from which \eqref{step3} is trivially deduced. Notice that we have obtained in fact a stronger statement than \eqref{step3}, since our limit holds in the almost sure sense.

\noindent
{\it Step 5.}  We next show that the term $I_3$ in \eqref{eq:dcp-log-lik} converges to zero in $\bp_{\theta}$-probability as $\tau \rightarrow \infty$. 
For this, set $g_{t}(\te)=\si^{-1}[D_{+}^{H-1/2}b(Y;\te)]_t$. One can recast $I_3$ into:
\begin{equation*}
I_{3}
=
\int_{0}^{\tau} \lln  g_{t}(\te_{\tau}) - g_{t}(\te) - \partial_{\te}g_{t}(\te) (\te_{\tau}-\te) \rrn^{2} \, dt,
\end{equation*}
and we also recall that $\te_{\tau}-\te=\tau^{-1/2} u$. In addition, a simple application of Taylor's expansion for multivariate function  yields the existence of a $\la\in(0,1)$ such that:
\begin{equation*}
g_{t}(\te_{\tau}) - g_{t}(\te)
=
\partial_{\te} g_{t}(\xi_{\la}) (\theta_{\tau}-\theta),
\quad\text{where}\quad
\xi_{\la} = \te + \la (\te_{\tau}-\te).
\end{equation*}
Notice that under our standing Hypothesis \ref{hyp:f-coercive} we have $\partial_{\te}g_{t}(\xi_{\lambda})=\si^{-1}[D_{+}^{H-1/2}\hb(Y;\xi_{\lambda})]_t$. We thus get:
\begin{equation*}
I_3 =\frac{1}{\tau}
\int_0^\tau \vert \sigma^{-1} M_t(Y) u \vert^2 \, dt, \quad
\text{where} \quad
M_t(Y)= \int_{\R_{+}}  
\frac{\delta[\hb(Y;\xi_{\lambda})-\hb(Y;\theta)]_{t-r,t}}{r^{1/2+H}} \, dr.
\end{equation*}
Next we decompose $M_{t}(Y)$ into $M_{1,t}(Y)+M_{2,t}(Y)$, where
\begin{equation*}
M_{1,t}(Y)= \int_0^t
\frac{ \delta [\hb(Y;\xi_{\lambda})-\hb(Y;\theta)]_{t-r,t}}{r^{1/2+H}} \, dr,
\qquad
M_{2,t}(Y)= \int_t^{\infty}  
\frac{\delta [\hb(Y;\xi_{\lambda})-\hb(Y;\theta)]_{t-r,t}}{r^{1/2+H}} \, dr.
\end{equation*}
Recall that we have extended the definition of $Y$ to $\R_{-}$ by setting $Y_{t}=y_{0}$ for all $t\le 0$. 
Therefore a simplified expression for $M_{2,t}(Y)$ is as follows:
\begin{equation*}
M_{2,t}(Y) 
=
\lc \hb(Y_{t};\xi_{\lambda})-\hb(Y_{t};\theta) \rc  \int_t^{\infty} \frac{dr}{r^{ H+1/2}} ,
\end{equation*}
and using Hypothesis \ref{hyp:f-coercive}
we obtain that:
\begin{equation*} \begin{split}
\tau^{-1} \int_0^\tau \vert \sigma^{-1} M_{2,t}(Y) u\vert^2 \, dt \lesssim \tau^{-2} \int_0^\tau \frac{(1+\vert Y_t \vert^2)}{t^{2H-1}} dt.
\end{split}
\end{equation*}
Thus, by Proposition  \ref{prop:bnd-moments-Y},  the $L^1(\Omega)$-norm of this term is bounded by $c_H \tau^{-2H}$. Hence, this term converges in $\bp_{\theta}$-probability to zero as $\tau \rightarrow \infty$.

On the other hand, fix $\alpha \in (\frac{1}{1+2H}, \frac{1}{2H})$, and write
\begin{equation*} \begin{split}
M_{1,t}(Y) \leq \int_0^t
\frac{\left|\delta [\hb(Y;\xi_{\lambda})-\hb(Y;\theta)]_{t-r,t} \right|^{\alpha} \cdot \left|\delta [\hb(Y;\xi_{\lambda})-\hb(Y;\theta)]_{t-r,t}\right|^{1-\alpha}}{r^{1/2+H}} \, dr.
\end{split}
\end{equation*}
Then, appealing to Hypothesis \ref{hyp:f-coercive}, we get that
\begin{equation*} \begin{split}
\vert M_{1,t}(Y) \vert \lesssim \frac{1}{\tau^{\alpha/2}}  \int_0^t
\frac{\left(1+\vert Y_t \vert^{\alpha} +\vert Y_{t-r} \vert^{\alpha}\right) \vert Y_t-Y_{t-r} \vert^{1-\alpha}}{r^{H(1-\alpha)}r^{1/2+\alpha H}} \, dr 
\end{split}
\end{equation*}
Therefore,
\begin{equation*} \begin{split}
&\vert M_{1,t}(Y) \vert^2 \, \\
&\lesssim  \frac{1}{\tau^{\alpha}} \int_0^t \int_0^t
\frac{\left(1+\vert Y_t \vert^{\alpha} +\vert Y_{t-r_1} \vert^{\alpha}\right) \vert Y_t-Y_{t-r_1} \vert^{1-\alpha}}{r_1^{H(1-\alpha)}r_1^{1/2+\alpha H}} 
\frac{\left(1+\vert Y_t \vert^{\alpha} +\vert Y_{t-r_2} \vert^{\alpha}\right) \vert Y_t-Y_{t-r_2} \vert^{1-\alpha}}{r_2^{H(1-\alpha)}r_2^{1/2+\alpha H}}\, dr_1
dr_2.
\end{split}
\end{equation*}
Thus, again by Proposition \ref{prop:bnd-moments-Y}, the $L^1(\Omega)$-norm of the term
$
\frac{1}{\tau} \int_0^\tau\vert\sigma^{-1} M_{1,t}(Y) u\vert^2 dt
$
is bounded by
$$
\frac{c_H}{\tau^{1+\alpha}}\int_0^\tau \int_0^t \int_0^t
\frac{1}{r_1^{1/2+\alpha H}} 
\frac{1}{r_2^{1/2+\alpha H}}\, dr_1
dr_2 dt=\frac{c_H}{\tau^{1+\alpha}}\int_0^\tau t^{1-2H\alpha} dt=\frac{c_H}{\tau^{\alpha (2H+1)-1}},
$$
which converges to zero as $\tau \rightarrow \infty$ since $\alpha>\frac{1}{2H+1}$.

\vskip 12pt

\noindent
{\it Step 6.} 
We finally show that $I_2$ and $I_4$ are also negligible terms. Since $I_3$ is the quadratic variation of the martingale $I_2$, this implies that $I_2$ converges to zero in $\bp_{\theta}$-probability
as $\tau \rightarrow \infty$. On the other hand, applying Cauchy-Schwarz inequality to $I_4$,
we get that
\begin{equation*}
\begin{split}
\vert I_4 \vert \leq  \left( \frac{1}{\tau}\int_0^\tau \vert \sigma^{-1}  [D_{+}^{H-1/2} \hat{b}(Y;\theta)]_tu  \vert^2 dt \right)^{1/2} \times I_3^{1/2},
\end{split}
\end{equation*}
Thus, by the results in Steps 3 and 4, we obtain that $I_4$ converges to zero in $\bp_{\theta}$-probability as $\tau \rightarrow \infty$, which completes the proof of Theorem \ref{lan}.

\section{Ornstein-Uhlenbeck case}

Though our result encompasses a wide range of coefficients $b$, it is worth illustrating it on a simple linear case, that is for  the real-valued fractional Ornstein-Uhlenbeck process corresponding to $b(x;\te)=-\te\, x$ and $\si\equiv \1$. In addition, we will assume that our parameter $\te$ is an element of the set $\Theta$, which is a compact interval in $(0, \infty)$. Specifically, the equation followed by $Y$ is the following:
\begin{equation} \label{ou}
Y_t=-\theta \int_0^t Y_s \, ds +B_t,  \qquad t\in\ott, \quad \theta>0,
\end{equation}
and the stationary solution is given by:
$$
\by_t= \int_{-\infty}^t e^{-\theta (t-s)} \, dB_s.
$$

\subsection{Computation of the LAN variance}

One of the advantages of the Ornstein-Uhlenbeck case is that it allows explicit computations of the LAN variance $\Sigma(\theta)$. We summarize this possibility in the following proposition:

\begin{proposition} \label{ousigma}
Let $Y$ be the fractional Ornstein-Uhlenbeck process solution to \textnormal{(\ref{ou})}. Then the LAN property (\ref{limit}) is satisfied with
$$
\Sigma(\theta)=\frac{1}{2\theta}.
$$
\end{proposition}

\begin{remark}
As mentioned in the introduction, 
we observe that in this case $\Sigma(\theta)$ does not depend on $H$, and is equal to the asymptotic variance of the LAN property of the Ornstein-Ulhenbeck process driven by a standard Brownian motion.
\end{remark}

\begin{proof}[Proof of Proposition \ref{ousigma}]
By (\ref{gamma_def}), we have the following expression for $\Sigma(\theta)$
\begin{equation*} \begin{split}
\Sigma(\theta)=C_H \int_{\R_{+}^{2}} \frac{\be_{\theta}[\by^2_{0}]
-\be_{\theta}[\by_{0}\by_{r_{1}}]-\be_{\theta}[\by_{0}\by_{r_{2}}]
+\be_{\theta}[\by_{0} \by_{r_2-r_{1}} ]}{r_{1}^{1/2+H} r_{2}^{1/2+H}}  \, dr_{1} dr_{2},
\end{split}
\end{equation*}
where $C_H$ is given by (\ref{ch}).

We start by computing the covariance of $\by$ between times $0$ and $t$. Namely, thanks to  \cite[(3.4)]{PT}, we obtain
\begin{equation*} 
\begin{split}
\be_{\theta}[\by_{0}\by_{t}]&=\frac{1}{  c_5(H)}\int_{\R} \vert \xi \vert^{1-2H} \mathcal{F}({\bf 1}_{(-\infty,t)} 
e^{-\theta(t-\cdot)}) (\xi) \overline{\mathcal{F}({\bf 1}_{(-\infty,0)} 
e^{\theta \cdot}) (\xi)}\, d \xi \\
&=\frac{1}{  c_5(H)}\int_{\R} \vert \xi \vert^{1-2H} \frac{e^{it\xi}}{(i\xi+\theta)} \frac{1}{(-i\xi+\theta)} \, d\xi \\
&=\frac{1}{  c_5(H)} \int_{\R} \frac{\vert \xi \vert^{1-2H}}{\theta^{2}+\xi^2} e^{it \xi} \, d \xi,
\end{split}
\end{equation*}
where $ \cf (f)(\xi) = \int_{\R} f(t) e^{it\xi} dt$  and
\begin{eqnarray}\label{eqn.c5}
c_5(H)=\frac{2 \pi}{\Gamma(2H+1) \sin(\pi H)}. 
\end{eqnarray}
Therefore, using Fubini's theorem, we obtain
\begin{align} 
\Sigma(\theta)&=\frac{C_H}{  c_5(H)}\int_{\R_+^{2}} \int_{\R} \frac{\vert \xi \vert^{1-2H}}{\theta^{2}+\xi^2}
 \frac{\left(1
-e^{i r_1 \xi}-e^{ir_2 \xi}
+e^{i(r_2-r_{1}) \xi}\right)}{r_{1}^{1/2+H} r_{2}^{1/2+H}} \, d\xi \, dr_{1} dr_{2} 
\nonumber
\\
&=\frac{C_H}{  c_5(H)}\int_{\R} \frac{\vert \xi \vert^{1-2H}}{\theta^{2} +\xi^2}
\int_{\R_+^{2}}  \frac{\left(1
-e^{i r_1 \xi}\right)\left(1-e^{-ir_2 \xi}\right)}{r_{1}^{1/2+H} r_{2}^{1/2+H}} \, dr_{1} dr_{2}\, d\xi
\nonumber
 \\
&=\frac{C_H}{  c_5(H)}\int_{\R} \frac{\vert \xi \vert^{1-2H}}{\theta^{2} +\xi^2}
\bigg\vert \int_0^{\infty}  \frac{1
-e^{i r \xi}}{r^{1/2+H} } \, dr \bigg\vert^2\, d\xi 
\nonumber
\\
&=\frac{2C_H}{  c_5(H)}\int_{0}^{\infty}  \frac{\xi^{1-2H}}{\theta^{2} +\xi^2}
\bigg\vert \int_0^{\infty}  \frac{1
-e^{i r \xi}}{r^{1/2+H} } \, dr \bigg\vert^2\, d\xi .
\label{eqn.sigma.var}
\end{align}
Thus, setting $y=r\xi$ in the integral above, we get
\begin{eqnarray*}
\Sigma (\theta) 
&=
&
\frac{2C_H}{  c_5(H)}\int_{0}^{\infty}  \frac{1}{\theta^{2} +\xi^2}\, d\xi \, 
\bigg\vert \int_0^{\infty}  \frac{1
-e^{i y}}{y^{1/2+H} } \, dy \bigg\vert^2 \\
&=& \frac{\pi C_H}{\theta c_5(H)}
\bigg\vert \int_0^{\infty}  \frac{1
-e^{i y}}{y^{1/2+H} } \, dy \bigg\vert^2.
\end{eqnarray*}

We next write
\begin{equation*}
\begin{split}
\bigg\vert \int_0^{\infty}  \frac{1
-e^{i y}}{y^{1/2+H} } \, dy \bigg\vert^2 =\left(\int_0^{\infty}  \frac{1
-\cos(y)}{y^{1/2+H} } \, dy\right)^2 +
\left(\int_0^{\infty}  \frac{\sin(y)}{y^{1/2+H} } \, dy \right)^2.
\end{split}
\end{equation*}
For the first integral on the right hand side we use the integration by parts $u=1-\cos(y)$, $dv=y^{-\frac12-H}$, $du=\sin(y)$, $v=\frac{y^{\frac12-H}}{(\frac12-H)}$.
For the second integral, we use $u=\sin(y)$, $du=\cos(y)$, to get that
\begin{equation*}
\begin{split}
&\bigg\vert \int_0^{\infty}  \frac{1
-e^{i y}}{y^{1/2+H} } \, dy \bigg\vert^2 =\left(\frac{1}{(H-\frac12)}\int_0^{\infty}  \frac{\sin(y)}{y^{H-1/2} } \, dy\right)^2 +
\left(\frac{1}{(H-\frac12)}\int_0^{\infty}  \frac{\cos(y)}{y^{H-1/2} } \, dy \right)^2 \\
&=\left(\frac{\pi}{2(H-\frac12)\Gamma(H-\frac12) \sin\left( \frac{\pi (H-\frac12)}{2}\right)}\right)^2 +
\left(\frac{\pi}{2(H-\frac12)\Gamma(H-\frac12) \cos\left( \frac{\pi (H-\frac12)}{2}\right)}\right)^2 \\
&=\frac{\pi^2}{4\Gamma^2(H+\frac12)\sin^2\left( \frac{\pi (H-\frac12)}{2}\right) \cos^2\left( \frac{\pi (H-\frac12)}{2}\right) } \\
&=\frac{\pi^2}{\Gamma^2(H+\frac12)\sin^2\left( \pi (H-\frac12)\right) },
\end{split}
\end{equation*}
where in the second equation we have   used some standard formulas for improper integrals (see e.g. \cite[Page 331-332]{Z}).
Plugging this 
identity into the expression \eqref{eqn.sigma.var} we have obtained for
 $\Sigma(\theta)$, and using the definition of $C_H$ in (\ref{ch}) and of $c_{5}(H)$ in \eqref{eqn.c5},  we obtain that
\begin{equation*} \begin{split}
\Sigma(\theta)=\frac{\pi C_H}{\theta c_5(H)}\frac{\pi^2}{\Gamma^2(H+\frac12)\sin^2\left( \pi (H-\frac12)\right) }= \frac{
 \Gamma(2H+1)}{4 \theta H\Gamma(2H) }=\frac{1}{2\theta},
\end{split}
\end{equation*}
which concludes the result.
\end{proof}

\subsection{Efficiency of the MLE}

Consider now the MLE $\hat{\theta}_{\tau}$ of $\theta$ from the observation of a fractional Ornstein-Uhlenbeck process $Y$ in $\ott$, as defined in \cite{KL}.
It is well-known (see \cite{BCS, BK, KL}) that $\hat{\theta}_{\tau}$ is uniformly consistent on $\Theta$ (recall that $\Theta$ is assumed to be a compact subinterval of $(0, \infty)$ in the Ornstein-Uhlenbeck sense). That is, for any $\lambda>0$, we have
\begin{equation*} 
\lim_{\tau\to\infty} \sup_{\theta \in \Theta} {\bp}^{\tau}_{\theta}\left( \left\vert \hat{\theta}_{\tau}-\theta \right\vert>\lambda \right) = 0.
\end{equation*}
The estimator $\hat{\theta}_{\tau}$ is also uniformly asymptotically normal:
\begin{equation*} 
\mathcal{L}({\bp}_{\theta})-\lim_{\tau\to\infty} \sqrt{\tau} (\hat{\theta}_{\tau}-\theta))=\mathcal{N}(0,2\theta),
\end{equation*}
where the limit is uniform in $\theta \in \Theta$. Moreover, we have a uniform convergence of moments. Namely, for any $p>0$, one gets:
\begin{equation}\label{a0}
\lim_{\tau\to\infty} \sup_{\theta \in \Theta}\left\vert \be_{\theta} \lc \left\vert \sqrt{\tau}(\hat{\theta}_{\tau}-\theta)\right\vert^p \rc
-
\be_{\theta} \lc\left\vert \sqrt{2 \theta} Z\right\vert^p \rc \right\vert=0,
\end{equation}
where $\mathcal{L}(Z)=\mathcal{N}(0,1)$. 
In particular, the last two results   already suggest that the rate of convergence for the LAN property in this case is $\tau^{-1/2}$, as mentioned in \cite[p.162]{Rao}. Our Theorem \ref{lan} confirms this intuition, and one can see that the MLE reaches this optimal rate of order $\tau^{-1/2}$. 

Moreover, as a consequence of Theorem \ref{lan} and Proposition \ref{ousigma}, we obtain the asymptotic efficiency for polynomial loss functions  of the MLE for the fractional Ornstein-Uhlenbeck process, in   the sense of Theorem \ref{thm:minimax}.
\begin{proposition}
Let $Y$ be the fractional Ornstein-Uhlenbeck process solution to \textnormal{(\ref{ou})}. Then the MLE is asymptotically minimax efficient for any loss function $\ell(u)=\vert u \vert ^p$, $p>0$.
\end{proposition}

\begin{proof}
Fix $\theta \in \Theta$. Then, for all $\delta>0$ and $p>0$,
\begin{equation*}\begin{split}
&\sup_{\theta'\in \Theta:\vert\theta'-\theta\vert<\delta}\left\vert \be_{\theta'} \lc \left\vert \sqrt{\tau}(\hat{\theta}_{\tau}-\theta')\right\vert^p \rc
-
\be_{\theta} \lc\left\vert \sqrt{2 \theta} Z\right\vert^p \rc \right\vert\\
&\leq \sup_{\theta'\in \Theta}\left\vert \be_{\theta'} \lc \left\vert \sqrt{\tau}(\hat{\theta}_{\tau}-\theta')\right\vert^p \rc
-
\be_{\theta'} \lc\left\vert \sqrt{2 \theta'} Z\right\vert^p \rc \right\vert\\
&\qquad \qquad \qquad +
\sup_{\theta'\in \Theta:\vert\theta'-\theta\vert<\delta}\left\vert \be_{\theta'} \lc\left\vert \sqrt{2 \theta'} Z\right\vert^p \rc 
-
\be_{\theta} \lc\left\vert \sqrt{2 \theta} Z\right\vert^p \rc \right\vert,
\end{split}
\end{equation*}
where $\mathcal{L}(Z)=\mathcal{N}(0,1)$. 
Then, the uniform convergence of the moments (\ref{a0}) implies that 
\begin{equation*}\begin{split}
&\lim_{\tau \rightarrow \infty}\sup_{\theta'\in \Theta:\vert\theta'-\theta\vert<\delta}\left\vert \be_{\theta'} \lc \left\vert \sqrt{\tau}(\hat{\theta}_{\tau}-\theta')\right\vert^p \rc
-
\be_{\theta} \lc\left\vert \sqrt{2 \theta} Z\right\vert^p \rc \right\vert\\
&\quad \quad \leq \sup_{\theta'\in \Theta:\vert\theta'-\theta\vert<\delta}\left\vert \be_{\theta'} \lc\left\vert \sqrt{2 \theta'} Z\right\vert^p \rc 
-
\be_{\theta} \lc\left\vert \sqrt{2 \theta} Z\right\vert^p \rc \right\vert.
\end{split}
\end{equation*}
Therefore, 
\begin{equation*}
\lim_{\delta\to 0}\lim_{\tau \rightarrow \infty}\sup_{\theta'\in \Theta:\vert\theta'-\theta\vert<\delta}\left\vert \be_{\theta'} \lc \left\vert \sqrt{\tau}(\hat{\theta}_{\tau}-\theta')\right\vert^p \rc
-
\be_{\theta} \lc\left\vert \sqrt{2 \theta} Z\right\vert^p \rc \right\vert=0.
\end{equation*}
Thus, by Proposition \ref{ousigma},  the lower bound in \eqref{eq:low-bnd-minimax} is achieved by the MLE when $\ell(u)=\vert u \vert^p$, which completes the proof.
\end{proof}

\end{document}